\theoremstyle{plain}   
\newtheorem{thm}{Theorem}[section] 
\newtheorem{cor}[thm]{Corollary}
\newtheorem{lemma}[thm]{Lemma}
\newtheorem{prop}[thm]{Proposition}
\newtheorem{thrm}[thm]{Theorem}
\theoremstyle{definition}
\newtheorem{defn}[thm]{Definition}
\theoremstyle{remark}
\newtheorem{rem}[thm]{Remark}
\newtheorem{ex}[thm]{Example}
\let\c@equation\c@thm
\numberwithin{equation}{section}
\newcommand{\N}{\mathcal{N}}
\newcommand{\M}{\mathcal{M}}
\newcommand{\C}{\mathcal{C}}
\newcommand{\V}{\mathcal{V}}
\newcommand{\A}{\mathcal{A}}
\newcommand{\B}{\mathcal{B}}
\newcommand{\pcC}{\mathit{C}}
\newcommand{\pcD}{\mathit{D}}
\newcommand{\ab}{\mathcal{A}b}
\newcommand{\Spec}{\mathcal{S}pec}
\newcommand{\K}{\mathbb{K}}
\newcommand{\D}{\mathcal{D}}
\newcommand{\sma}{\wedge}
\newcommand{\fto}{\xrightarrow}
\newcommand{\xto}{\xrightarrow}
\newcommand{\perm}{Perm}
\newcommand{\permmc}{\underline{Perm}}
\newcommand{\kobj}{\Phi}
\newcommand{\PC}{\mathbf{PC}}
\newcommand{\col}{\colon}
\newcommand{\BGcat}{G\mathcal{E}}
\newcommand{\BGspec}{G\mathcal{B}}
\newcommand{\op}{\text{op}}
\newcommand{\cat}{Cat}
\newcommand{\HoSpec}{\mathrm{Ho}\Spec}
\newcommand{\comp}{\textrm{comp}}
\newcommand{\id}{\textrm{id}}
\newcommand{\finset}{\mathcal{F}}
\DeclareMathOperator{\Fun}{Fun}
\newcommand{\eqmach}{K_G}
\title{Constructing equivariant spectra via categorical Mackey functors}
\author{Anna Marie Bohmann}
\address{
Department of Mathematics,
Northwestern University,
Evanston, IL 60208 USA
}
\email{bohmann@math.northwestern.edu}
\author{Ang\'elica M. Osorno}
\address{
Department of Mathematics,
Reed College,
Portland, OR 97202 USA
}
\email{aosorno@reed.edu}
\let\oldmarginpar\marginpar
\renewcommand\marginpar[1]{\-\oldmarginpar[\raggedleft\footnotesize #1]%
{\raggedright\footnotesize #1}}
\begin{document}

\begin{abstract}
We give a functorial construction of equivariant spectra from a  generalized version of Mackey functors in categories.  This construction relies on the recent description of the category of equivariant spectra due to Guillou and May.  The key element of our construction is a spectrally-enriched functor from a spectrally-enriched version of permutative categories to the category of spectra that is built using an appropriate version of $K$-theory.  As applications of our general construction, we produce a new functorial construction of equivariant Eilenberg--MacLane spectra for Mackey functors and for suspension spectra for finite $G$-sets.
\end{abstract}

\thanks{ This material is partially based on work supported by the National Science Foundation under Grant No. 0932078 000 while the authors were in residence at the Mathematical Sciences Research Institute in Berkeley, California, during the Spring 2014 semester. }

\maketitle
\section{Introduction}

Spectra are the main objects of study in stable homotopy theory and Eilenberg--MacLane spectra are arguably the simplest kind of spectra. These are the spectra that represent ordinary cohomology with coefficients in abelian groups; they have precisely one nonzero homotopy group.   Their basic role in the field has led topologists to a variety of constructions of Eilenberg--MacLane spectra for  abelian groups.  In particular, abelian groups are one  of the simplest forms of input for all known ways of building spectra from algebraic or categorical data.
 For example, classical infinite loop space theory builds infinite loop spaces, and hence spectra, from symmetric monoidal categories.  An abelian group can be regarded as a ``discrete'' symmetric monoidal category, with the elements of the group as objects and only identity morphisms. Applying an infinite loop space machine to this simple input produces the Eilenberg--MacLane spectrum for the abelian group in question.

The natural equivariant version of ordinary cohomology is \emph{Bredon cohomology,} whose coefficients are given by \emph{Mackey functors}.  These cohomology theories also have representing Eilenberg--MacLane spectra which are again particularly simple examples of spectra.  These spectra can be constructed via obstruction theory techniques or via the abelianization method of dos Santos and Nie \cite{dosSantosNie}.  However, these constructions are all particular to Eilenberg--MacLane spectra.  Furthermore, while machines for constructing general equivariant infinite loop spaces exist \cite{CostWan,Shimakawa,LMS}, Mackey functors cannot transparently be regarded as input for these machines as they can in the nonequivariant case. In this paper, we provide a construction of equivariant spectra that bridges this gap: we produce a genuine equivariant spectrum from a natural categorical generalization of Mackey functors, of which the simplest examples are Mackey functors themselves.

Our construction emphasizes the view that equivariant objects should be understood via their ``fixed points.'' This idea has its roots in the study of group actions on spaces, where one of the most frequently used constructions is that of the \emph{fixed point space}.  If $X$ is a space with a $G$-action, the $G$-fixed points of $X$ is the space $X^G$ consisting of points of $X$ fixed by all elements of $G$. In general, the space $X^G$ forgets a significant amount of information about $X$. 
 However, by considering the collection of fixed point spaces $X^H$  for all subgroups $H$ of $G$,  we can recover $X$ as the fixed points under the identity subgroup.  Moreover, there are natural inclusion and conjugation maps relating the fixed point spaces $X^H$ as $H$ varies, which mean that the collection of fixed point spaces has the structure of a presheaf on the \emph{orbit category} of $G$.  Up to homotopy, any such presheaf can be realized as the fixed points of a space $X$.   This is the content of Elmendorf's theorem \cite{Elmendorf83} and its extension to model categories by Piacenza \cite{Piacenza91}.  Thus, up to homotopy, it suffices to work with the category of these presheaves, and we may thus consider a $G$-space solely in terms of its presheaf of fixed point spaces.

In the stable world, the idea of viewing equivariant objects in terms of their fixed points is at the heart of one of the two main approaches to genuine $G$-spectra. 
Genuine $G$-spectra are more than just spectra with an action of a group $G$: they are required to have extra structure which allows for characteristic classes for equivariant vector bundles.  The most common way of encoding this structure is by requiring a genuine $G$-spectrum to have deloopings by $G$-representation spheres in addition to spheres with a trivial $G$-action.  However, one can instead  require ``transfers'' or ``wrong-way'' maps between fixed point spectra.   These maps naturally endow the homotopy groups of a $G$-spectrum with the structure of a Mackey functor, and Mackey functors are thus the key structure that arises when considering equivariant spectra in terms of their fixed points, as explained in \cite{alaska}.  Thus, a general construction of $G$-spectra in a way that extends a construction equivariant Eilenberg--MacLane spectra is a fundamental part of a complete understanding of $G$-spectra from the fixed-point perspective.  Our main theorem is designed to supply such a construction.

 In this paper, we give a machine for producing a genuine equivariant spectrum from an appropriate  ``Mackey functor'' of categorical data, defined in terms of an enriched functor out of a permutative category enriched version of the Burnside category.  Our main theorem can be loosely stated as follows. We make this precise in Section \ref{infiniteloops}.
\begin{thrm}[Main Theorem]\label{thrm:inftloopmachine}
Let $G$ be a finite group.  A ``Mackey functor of permutative categories'' determines a spectral Mackey functor $\BGspec^\op\to \Spec$, which by Guillou--May \cite{GM2011} represents a genuine $G$-spectrum.
\end{thrm}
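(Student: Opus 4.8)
The plan is to separate the construction into two essentially independent pieces: a general machine that converts enriched categorical data into spectral data, and an input datum that packages ``fixed-point categories with transfers'' in a form the machine accepts. First I would make precise the category of inputs. A \emph{Mackey functor of permutative categories} should be a $\permmc$-enriched functor $M\col \BGcat \to \permmc$, where $\BGcat$ is a permutative-category-enriched version of the Burnside category: its objects are the orbits $G/H$ (or all finite $G$-sets), and its hom-objects $\BGcat(G/H,G/K)$ are the permutative categories of spans of finite $G$-sets over $G/H\times G/K$, with composition given by pullback of spans and symmetric monoidal structure given by disjoint union. The enrichment rests on a symmetric monoidal structure $\otimes$ on $\permmc$ — the tensor product of permutative categories of Elmendorf--Mandell — for which composition of spans is a bilinear (hence $\otimes$-multilinear) functor; I would recall the relevant coherence of $\otimes$ and verify that the span construction assembles into a genuine $\permmc$-category.

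The heart of the argument is the construction of a lax symmetric monoidal, $\permmc$-to-$\Spec$ \emph{enriched} $K$-theory functor $\K\col \permmc \to \Spec$. Concretely: fix a multiplicative infinite loop space machine; it produces a spectrum from each permutative category, and its multiplicativity provides natural maps $\K\A \sma \K\B \to \K(\A\otimes\B)$ compatible with the unit and with the associativity and symmetry constraints, i.e.\ a lax symmetric monoidal structure on $\K$. Lax symmetric monoidality is exactly what promotes $\K$ to a functor of enriched categories: a $\permmc$-category $\mathcal{C}$ is sent to the $\Spec$-category $\K_*\mathcal{C}$ with the same objects, hom-spectra $\K(\mathcal{C}(a,b))$, composition
\[
\K(\mathcal{C}(b,c))\sma\K(\mathcal{C}(a,b))\to \K\bigl(\mathcal{C}(b,c)\otimes\mathcal{C}(a,b)\bigr)\to \K(\mathcal{C}(a,c)),
\]
and units induced from those of $\mathcal{C}$; likewise a $\permmc$-enriched functor is carried to a $\Spec$-enriched functor. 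I expect this step to be the main obstacle: one must build $\K$ so that it is strictly (not just incoherently) lax symmetric monoidal while simultaneously being enriched over $\permmc$ on hom-objects, which forces the use of a point-set-level multiplicative machine with good formal properties and a fair amount of bookkeeping relating $\otimes$, $\sma$, and the machine.

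Granting the machine, the theorem should follow formally. Applying $\K_*$ to $\BGcat$ produces a spectral category $\BGspec := \K_*\BGcat$ whose hom-spectra are the $K$-theory spectra of $G$-span categories; applying $\K_*$ to the input $M\col\BGcat\to\permmc$ and noting that $\K_*\permmc$ is just $\Spec$ under the enriched $K$-theory of $\permmc$ itself yields a spectral functor $\BGspec^\op\to\Spec$ — a spectral Mackey functor. Finally I would identify $\BGspec$, up to equivalence of spectral categories, with the Burnside category of Guillou--May \cite{GM2011}: both have orbits as objects and $K$-theory of span categories as morphism spectra, so it suffices to compare the two models of the span categories and of their pullback composition, a direct check. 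By the Guillou--May theorem, spectral Mackey functors on this category model genuine $G$-spectra, so the spectral Mackey functor we have produced represents a genuine $G$-spectrum; tracing through the construction shows the whole assignment is functorial in $M$, as claimed.
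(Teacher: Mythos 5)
Your proposal diverges from the paper at two points, and both are genuine gaps rather than routine details. First, you base the entire enrichment and the multiplicativity of $K$-theory on a symmetric monoidal tensor product $\otimes$ of permutative categories for which composition of spans is bilinear and for which $\K$ is lax symmetric monoidal. No such well-behaved $\otimes$ is available to ``recall'': representing bilinear and multilinear functors of permutative categories by a coherently associative and symmetric tensor product is exactly the notorious difficulty that leads Elmendorf--Mandell, and this paper, to work instead with the \emph{multicategory} $\permmc$ of multilinear functors, to define $\BGcat$ and $\perm$ as categories enriched in that multicategory (Sections 2, 4, 5), and to demand only that $\K$ be a multifunctor $\permmc\to\Spec$ (Theorem \ref{Kpairs}). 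Your step ``recall the relevant coherence of $\otimes$ and verify\dots'' is therefore not bookkeeping but an unproven hypothesis on which the rest of your argument rests; the multicategorical formulation is how the paper avoids needing it.

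Second, and more seriously, your final step asserts that ``$\K_*\permmc$ is just $\Spec$,'' i.e.\ that changing enrichment along $\K$ applied to the self-enriched category of permutative categories yields the category of spectra with its usual function spectra. It does not: the hom-spectrum from $\A$ to $\B$ in $\K_\bullet(\perm)$ is $\K(\perm(\A,\B))$, the $K$-theory of the category of strictly unital lax symmetric monoidal functors, which is not the function spectrum $F(\K\A,\K\B)$, and no identification between them is automatic. Bridging exactly this gap is the paper's main technical result (Theorem \ref{thrm:constructk}, proved in Section \ref{constructk}): one constructs a spectrally enriched functor $\kobj\col\K_\bullet(\perm)\to\Spec$ whose hom-map $\K(\perm(\A,\B))\to F(\K\A,\K\B)$ is the adjoint of $\K(ev)$ for the evaluation bilinear functor $ev\col(\perm(\A,\B),\A)\to\B$ of Proposition \ref{prop:evpairing}, and then verifies compatibility with composition and units using the compatibility of evaluation with composition (Diagram \ref{trilineareval}) and Lemma \ref{lemma:Kon0morph}. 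Without $\kobj$ (or an equivalent comparison), applying $\K_\bullet$ to your input only produces a spectral functor $\BGspec^\op\to\K_\bullet(\perm)$, not a spectral Mackey functor valued in $\Spec$, so the appeal to Guillou--May cannot yet be made. (Two minor points: the input should be contravariant, a $\PC$-functor $\BGcat^\op\to\perm$, so that the output is a presheaf; and its target should be the self-enriched $\PC$-category $\perm$, not the multicategory $\permmc$ itself.)
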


Moreover, because the structure of fixed points is inherent in this construction, the Mackey functor of homotopy groups of the resulting spectrum is easily accessible.  In particular, this makes it easy to build Eilenberg--MacLane spectra for Mackey functors.
\begin{cor}\label{thrm:EMspectra}
Theorem \ref{thrm:inftloopmachine} provides a functorial construction of Eilenberg--MacLane spectra for Mackey functors.
\end{cor}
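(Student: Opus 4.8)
The plan is to realize ordinary Mackey functors as a full subcategory of the ``Mackey functors of permutative categories'' that feed into Theorem~\ref{thrm:inftloopmachine}, via a ``discretization'' functor, and then to identify the output of the machine on such input with the equivariant Eilenberg--MacLane spectrum. Given a Mackey functor $M$ valued in abelian groups, I would first build a Mackey functor of permutative categories $\underline{M}$ by assigning to each orbit $G/H$ the \emph{discrete} permutative category $\underline{M}(G/H)$ whose objects are the elements of the abelian group $M(G/H)$, whose only morphisms are identities, and whose symmetric monoidal structure is strictly the addition of $M(G/H)$ (with unit $0$ and identity associativity, unit, and symmetry constraints), extending to a general finite $G$-set by taking products over its orbits. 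The restriction, transfer, and conjugation homomorphisms of $M$ are group homomorphisms, hence strict symmetric monoidal functors between these discrete categories, and the relations among them --- in particular the double coset formula --- hold strictly, since a functor between discrete categories is determined by its effect on objects. One then verifies that this data assembles into a spectrally-enriched functor out of the permutative-category-enriched Burnside category of Section~\ref{infiniteloops}, i.e.\ into a genuine object of the domain of the machine; this is bookkeeping, since a discrete permutative category carries no higher coherence data and every coherence cell demanded by the enriched functoriality is an identity.

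Next I would run the machine. Applied to a discrete permutative category associated to an abelian group $A$, the $K$-theory functor underlying Theorem~\ref{thrm:inftloopmachine} produces the nonequivariant Eilenberg--MacLane spectrum $HA$ --- this is the classical fact, recalled in the introduction, that an infinite loop space machine takes a discrete symmetric monoidal category to an Eilenberg--MacLane spectrum. Consequently the spectral Mackey functor $\BGspec^\op \to \Spec$ produced from $\underline{M}$ takes the orbit $G/H$ to $HM(G/H)$, with structure maps given by the Eilenberg--MacLane spectra of the homomorphisms $\mathrm{res}$, $\mathrm{tr}$, and conjugation of $M$.

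Finally I would identify the represented $G$-spectrum. By the Guillou--May correspondence \cite{GM2011}, the genuine $G$-spectrum $X$ represented by this spectral Mackey functor has homotopy Mackey functors $\underline{\pi}_n X$ computed by $G/H \mapsto \pi_n\bigl(HM(G/H)\bigr)$ together with the induced restriction and transfer maps. Since $\pi_0(HA) = A$ and $\pi_n(HA) = 0$ for $n \neq 0$, this gives $\underline{\pi}_0 X = M$ --- with precisely its given Mackey functor structure, because the structure maps on $\pi_0$ are exactly $\mathrm{res}$, $\mathrm{tr}$, and conjugation --- and $\underline{\pi}_n X = 0$ for $n \neq 0$. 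A genuine $G$-spectrum whose homotopy Mackey functors are concentrated in degree zero, where they equal $M$, is by definition (equivalently, by the equivariant Postnikov tower) the Eilenberg--MacLane spectrum $HM$, so $X \simeq HM$. Functoriality is inherited from Theorem~\ref{thrm:inftloopmachine}: a map $M \to N$ of Mackey functors is a compatible family of homomorphisms $M(G/H) \to N(G/H)$, which discretize to a morphism $\underline{M} \to \underline{N}$ of Mackey functors of permutative categories, and applying the functor of Theorem~\ref{thrm:inftloopmachine} yields a map $HM \to HN$ of $G$-spectra compatibly with composition and identities; hence $M \mapsto HM$ is the desired functorial construction.

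The main obstacle is the first step: checking that the strictly discrete data really is a legitimate object of the domain of the machine, i.e.\ that the strict equalities of symmetric monoidal functors furnished by the Mackey functor axioms line up with all of the coherence conditions packaged into a spectrally-enriched functor on the permutative-category Burnside category. Granting that, the computation of the homotopy Mackey functors and the identification with $HM$ are formal, depending only on the behavior of $K$-theory on discrete categories and on the Guillou--May dictionary.
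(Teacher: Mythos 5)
Your proposal is correct and follows essentially the same route as the paper: regard $M$ as a $\PC$-functor $\BGcat^\op\to\perm$ landing in discrete permutative categories (the paper packages this as the embedding $\ab\subset\perm$ together with the observation that the $2$-functor factors through the quotient $\B_G^\op$, using additivity of $M$), apply Theorem \ref{precisestatementofmainthrm}, and identify $\underline{\pi}_*$ via the fact that $\K$ of an abelian group is its nonequivariant Eilenberg--MacLane spectrum. The one step you wave at --- that the structure maps of the resulting spectral Mackey functor induce exactly $M(f)$ on $\pi_0$ --- is precisely what the paper verifies with Proposition \ref{pi0worksspectralevel} and Corollary \ref{pi0works}.
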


 Our construction is more than just a method for constructing Eilenberg--MacLane spectra: it also produces the sphere spectrum and other suspension spectra of finite $G$-sets.
This result is an equivariant analogue of the Barratt--Priddy--Quillen Theorem \cite{BPQthrm} which is an inherent part of any good construction of spectra from algebraic data. 

\begin{cor}\label{thrm:suspensionspectra}
Theorem \ref{thrm:inftloopmachine} produces suspension spectra for finite $G$-sets from input built solely from the category of finite $G$-sets. 
\end{cor}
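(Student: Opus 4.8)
The plan is to produce, for each finite $G$-set $T$, a concrete ``Mackey functor of permutative categories'' on which the machine of Theorem~\ref{thrm:inftloopmachine} evaluates to the suspension spectrum $\Sigma^\infty_+ T$, and then to observe that this input is assembled out of nothing but finite $G$-sets and their maps; the case $T = G/G$ gives the equivariant sphere spectrum and hence the desired analogue of Barratt--Priddy--Quillen \cite{BPQthrm}.

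First I would take the \emph{represented} functor. The $\permmc$-enriched Burnside category $\BGcat$ is enriched in permutative categories, so for any object $T$ the enriched representable
\[
\underline{T} \;:=\; \BGcat(-,T)\colon \BGcat^{\op}\longrightarrow\permmc
\]
is a Mackey functor of permutative categories in the sense of Section~\ref{infiniteloops}, and it is manifestly functorial in $T$. Its value $\underline{T}(G/H)$ is the permutative category of spans $G/H\leftarrow X\to T$ of finite $G$-sets under disjoint union; a span into $T$ is the same datum as a finite $G$-set over $G/H\times T$, so this category, as well as the restriction and transfer functors relating these categories as $H$ varies, is built solely from the category of finite $G$-sets. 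For $T=G/G$ one recovers the permutative category of finite $G$-sets over $G/H$, equivalently of finite $H$-sets.

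Then I would run the machine. Applying Theorem~\ref{thrm:inftloopmachine} to $\underline{T}$ gives a spectral Mackey functor $\BGspec^{\op}\to\Spec$. Because the machine is post-composition with the spectrally-enriched $K$-theory functor $\K$, and $\K$ carries $\BGcat$ to the spectral Burnside category $\BGspec$ --- the identification $\BGspec\simeq\K\BGcat$ that underlies the construction of the machine --- the output is the represented spectral Mackey functor $\BGspec(-,T)$. By Guillou--May \cite{GM2011}, the genuine $G$-spectrum represented by $\BGspec(-,T)$ is precisely $\Sigma^\infty_+ T$, and for $T=G/G$ this is the equivariant sphere spectrum $\mathbb{S}_G$. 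Chaining the three steps proves the corollary.

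The main obstacle is the identification $\K\BGcat\simeq\BGspec$ invoked above: one must check that the $K$-theory spectra of the permutative categories of spans, together with their induced pairings, reproduce the morphism spectra and the composition of the Guillou--May Burnside category. This is an equivariant Barratt--Priddy--Quillen statement at the level of the Burnside category; unwinding fixed points, it asserts that $\K\bigl(\BGcat(G/H,G/G)\bigr)$ --- the $K$-theory of the category of finite $H$-sets --- models the $H$-fixed points of $\mathbb{S}_G$, which carries the Burnside ring $A(H)$ on $\pi_0$ and splits as $\bigvee_{(K\le H)}\Sigma^\infty_+ BW_H K$ by the tom Dieck splitting. If $\BGspec$ is taken in the paper to be $\K\BGcat$ by definition, this step collapses to unwinding that definition together with the comparison to Guillou--May's model; otherwise it requires an explicit equivalence of spectral categories, compatible with the enriched functoriality that makes $\underline{T}$ a Mackey functor.
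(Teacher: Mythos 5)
Your route is the same as the paper's: take the representable $\PC$-functor $S_X=\BGcat(-,X)$ (the paper's Definition \ref{defnofsuspfunct}), feed it to the machine, and identify the output with the spectral presheaf represented by $X$ in $\BGspec$, which Guillou--May identify with $\Sigma^\infty_G(X_+)$. The outline is correct, but the weight is distributed in the wrong places. The step you wave through --- ``because the machine is post-composition with $\K$, the output is the represented spectral Mackey functor'' --- is exactly where the paper does its work: on objects the agreement is definitional, but on morphism spectra one must check that $\kobj\circ\K_\bullet S_X$ coincides with the map $\BGspec^\op(A,B)\to F(\BGspec^\op(X,A),\BGspec^\op(X,B))$ adjoint to composition in $\BGspec$. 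The paper verifies this by unwinding both adjoints and observing that the resulting square is $\K$ applied to the diagram of bilinear functors expressing the universal property of evaluation (Lemma \ref{lemma:closed}), which is precisely how $S_X$ is defined on morphisms and how $\kobj$ is defined via the evaluation pairing; a similar remark applies to the well-definedness of $S_X$ as a $\PC$-functor (Proposition \ref{suspenfunctorwelldefined}), which in the multicategorical enrichment setting is not automatic but again rests on Lemma \ref{lemma:closed}. Conversely, the issue you single out as the ``main obstacle'' is not one in this paper: $\BGspec$ is \emph{defined} as $\K_\bullet\BGcat$ (Definition of $\BGspec$ in Section \ref{infiniteloops}), and the identification of the representable presheaf $\BGspec^\op(X,-)$ with $\Sigma^\infty_G(X_+)$ is quoted as a black box from \cite[\S 2.5]{GM2011} (Proposition \ref{GMsuspensionspectra}); no tom Dieck splitting or computation of fixed points of the sphere is needed. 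So your proof becomes complete once you supply the adjoint-diagram check identifying the morphism-level maps, which is routine given Lemma \ref{lemma:closed} but is the actual content of the paper's Theorem \ref{identifysuspspec}.
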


 While other methods of constructing equivariant suspension spectra and Eilenberg--MacLane spectra exist, Theorem \ref{thrm:inftloopmachine} transparently unites these examples as special cases of the same general construction.

An essential tool in proving Theorem \ref{thrm:inftloopmachine} is a good nonequivariant $K$-theory machine for constructing spectra from permutative categories, which will be denoted $\K$.  We require our $K$-theory machine to be a multifunctor, as explained in Section \ref{sect:pairings}.  Multifunctoriality can be thought of as an extension of a theory of ``pairings'' for $\K$. Pairings originally arose as a way to control multiplicative-type structure on spectra.  A $K$-theory machine with pairings was first described in \cite{maypairings}, although not all of the details we require here appear there.  A complete account will appear in \cite{MMO}. An explicitly  multifunctorial $K$-theory machine is given by the multicategory version of Elmendorf and Mandell \cite{EM2006}.  However, to date there has been no comparison of the multiplicative or pairing structure for the two $K$-theory machines.  In order to apply the equivariant foundations of \cite{GM2011}, we choose to use their $K$-theory machine, which is an updated version of the one in \cite{maypairings}.  Nevertheless, the categorical constructions that follow  work equally well with either $K$-theory machine.  In particular, our primary technical construction is the following theorem, which can be proved using any sufficiently nice nonequivariant  multiplicative $K$-theory machine.
\begin{thrm}\label{thrm:constructk} There is a spectrally-enriched functor $\kobj\col \K_\bullet(\perm)\to \Spec$ which sends a permutative category $\A\in \perm$ to its $\K$-theory spectrum $\K(\A)$.
\end{thrm}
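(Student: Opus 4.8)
The plan is to extract $\kobj$ from the multifunctoriality of $\K$ together with the closed structure on the multicategory of permutative categories. On objects there is no choice: set $\kobj(\A):=\K(\A)$. For the effect on mapping spectra, recall that $\K_\bullet(\perm)(\A,\B)=\K(\permmc(\A,\B))$, where $\permmc(\A,\B)$ denotes the permutative category of morphisms $\A\to\B$ with its pointwise sum. To give the required map $\kobj_{\A,\B}\col\K(\permmc(\A,\B))\to\Spec(\K(\A),\K(\B))$ it suffices, by the closed structure on $\Spec$, to give its adjoint $\K(\permmc(\A,\B))\sma\K(\A)\to\K(\B)$. The evaluation functor $\permmc(\A,\B)\times\A\to\B$, $(F,a)\mapsto F(a)$, is a bilinear morphism of permutative categories: it is additive and zero-preserving in each variable, strictly so in the functor variable, and with distributivity data given by the coherence isomorphisms of $F$ in the $\A$-variable. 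Feeding this bilinear functor through the pairing data of the multifunctor $\K$ produces a map $\K(\permmc(\A,\B))\sma\K(\A)\to\K(\B)$, and we take $\kobj_{\A,\B}$ to be its adjoint.

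Next I would check the two axioms for a $\Spec$-enriched functor. Composition in $\K_\bullet(\perm)$ is, by construction, $\K$ applied to the bilinear composition functor $\permmc(\A,\B)\times\permmc(\B,\C)\to\permmc(\A,\C)$, $(F,G)\mapsto G\circ F$. The crucial point is that the $3$-ary multilinear map $\permmc(\A,\B)\times\permmc(\B,\C)\times\A\to\C$, $(F,G,a)\mapsto G(F(a))$, arises in two ways: by composing evaluation with composition (``compose $F$ and $G$, then evaluate at $a$'') and by composing evaluation with evaluation (``evaluate $F$ at $a$, then evaluate $G$ there''). These agree \emph{on the nose}, including their distributivity data. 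Since $\K$ is a multifunctor, it carries this equality to an equality of maps $\K(\permmc(\A,\B))\sma\K(\permmc(\B,\C))\sma\K(\A)\to\K(\C)$; unwinding the adjunctions, this is precisely the square comparing composition in $\K_\bullet(\perm)$, the maps $\kobj_{-,-}$, and composition in $\Spec$. For the unit axiom, the unit $\mathbb{S}\to\K(\permmc(\A,\A))$ of $\K_\bullet(\perm)$ is $\K$ applied to the morphism out of the unit permutative category that picks out $\id_\A$; since evaluating $\id_\A$ yields the identity functor, composing with $\kobj_{\A,\A}$ and using the unit coherence of $\K$ gives $\id_{\K(\A)}$. (That $\K_\bullet(\perm)$ is itself a spectral category follows by the same mechanism, from the strict associativity and unitality of composition of functors.)

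I expect the real work to be coherence bookkeeping rather than any one idea. On the categorical side one must confirm that evaluation and composition are genuinely bilinear and, more to the point, that the two descriptions of the $3$-ary map above coincide \emph{together with} all their distributivity and coherence data, since that equality is the engine of the composition axiom. On the spectral side one must isolate exactly which parts of the multifunctor structure of $\K$ are invoked --- compatibility with composites of multilinear maps, with the unit object, and with the symmetry --- and confirm that the Guillou--May $K$-theory machine supplies them with sufficient strictness, since Section~\ref{sect:pairings} frames its input as ``pairings'' rather than as an honest multifunctor. Granting that, the remaining diagram chases are routine: every diagram of permutative categories that appears commutes strictly, and hence so does its image under $\K$.
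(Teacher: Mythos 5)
Your proposal matches the paper's proof essentially step for step: $\kobj_{\A,\B}$ is defined as the adjoint of $\K(ev)$ using the bilinear evaluation functor of Proposition~\ref{prop:evpairing}, the composition axiom is reduced (after adjunction and closed-structure bookkeeping) to applying the multifunctor $\K$ to the strictly commuting compatibility of evaluation with composition, which is exactly Diagram~(\ref{trilineareval}), and the unit axiom is checked via the map $\finset\to\perm(\A,\A)$ picking out $\id_\A$ as in Lemma~\ref{lemma:Kon0morph}. This is the same argument as in Section~\ref{constructk}, so no further comparison is needed.
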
 
Here $\K_\bullet(\perm)$ is a spectrally enriched category of permutative categories, which will be defined explicitly in Section \ref{constructk}.

To orient the reader familiar with this area, we relate Theorem \ref{thrm:constructk} to other $K$-theory constructions relating permutative categories and spectra.  Elmendorf and Mandell \cite{EM2006} construct a simplicially-enriched functor from the multicategory of permutative categories to the multicategory of symmetric spectra.  Guillou \cite{Guillou2010} discusses the construction of spectrally enriched versions of categories ``enriched in symmetric monoidal categories'' using the work of \cite{EM2006}; a similar construction using May's $K$-theory machine is given in \cite{GM2011}.  We expand and combine these results to produce our spectrally-enriched functor from the category of permutative categories to the category of spectra.

The first several sections of this paper are devoted to establishing the categorical constructions needed to define the input of our construction. We then carry out the construction  and discuss examples of its application.  In Section \ref{sect:background}, we remind the reader of the definitions and tools needed for enrichment in a multicategory.  This language provides the basic framework for our categorical constructions. In Section \ref{sect:pairings}, we discuss the multicategory of permutative categories and state our requirements for the $K$-theory machine. We are explicitly interested in categories enriched in the multicategory of permutative categories, which are discussed in greater detail in Section \ref{sect:PCcat}, and in Section \ref{sect:perm} we discuss our main example, which is a self-enrichment of the category of permutative categories. In Section \ref{constructk}, we prove our main technical result,  Theorem \ref{thrm:constructk}. Section \ref{infiniteloops} assembles this data to prove the main theorem, Theorem \ref{thrm:inftloopmachine}.  In Section \ref{EMspec}, we apply our main theorem  to produce an especially nice construction of Eilenberg--MacLane spectra for Mackey functors. Finally, in  Section \ref{sect:suspspec}, we use this construction to build  suspension spectra for finite $G$-sets, including the sphere spectrum.

\subsection*{Acknowledgments}
The authors would like to thank Clark Barwick for the suggestion that kicked off this project.  They thank Peter May and Bert Guillou for sharing working drafts. Thanks also to Peter May, Bert Guillou, Mike Hill, Mona Merling and Andrew Blumberg for many interesting conversations that contributed the development of this paper. They would also like to thank Kirsten Wickelgren for a very insightful question that helped clarify some of the key ideas. The authors would like to thank Kathryn Hess, Paul Goerss and John Greenlees for helpful comments on this work. Finally, the authors thank an anonymous referee for comments that improved the exposition.

\section{Notation and categorical background}\label{sect:background}

One of the primary structures we use to define the categories and functors used in Theorem \ref{thrm:inftloopmachine} is the structure of enrichment in a multicategory.  This notion generalizes the more familiar notion of enrichment in a monoidal category.  In this section we will recall the basic categorical definitions involved in multicategory enrichments  and establish the notation used in the rest of the paper.

We will first recall the definition of a multicategory. For a full account, see Leinster \cite[\S 2.1]{leinster}. 
\begin{defn}A \emph{multicategory} $\M$ consists of a collection of objects, denoted $ob\M$, and for each $k\geq 0$ and objects $a_1,\cdots a_k, b$, a set $\M(a_1,\dots, a_k; b)$ of \emph{$k$-morphisms} $(a_1,\dots, a_k)\to b$.  The morphism sets are related by composition functions
\[
\xymatrix{
\makebox[.2\textwidth]{$\M(b_1,\dots b_n;c)\times \M(a_1^1,\dots , a_1^{k_1};b_1)\times \cdots \times \M(a_n^1,\dots , a_n^{k_n};b_n)$}\ar[d]\\
 \M(a_1^1,\dots , a_1^{k_1}, \dots ,a_n^1,\dots , a_n^{k_n};c).
}
\]
These functions are denoted by $(g; f_1,\dots f_n)\mapsto g\circ (f_1,\dots, f_n)$. For each object $a$, there is an identity morphism $1_a\in \M(a;a)$. All of this data is subject to associativity and identity conditions, which can be found in \cite[\S 2.1]{leinster}.
\end{defn}

 When working diagrammatically, we depict composition in $\M$ as
\[(a_1^1,\dots , a_1^{k_1}, \dots ,a_n^1,\dots ,a_n^{k_n}) \xrightarrow{(f_1,\dots f_n)} (b_1,\dots b_n)\xrightarrow{g} c.\]
This allows us to express equalities among composite morphisms in terms of commutative diagrams in $\M$. 

There is also a notion of \emph{symmetric multicategory} in which there is an action of the symmetric group $\Sigma _k$ on the collection of all $k$-morphisms, compatible with the action on the inputs and with composition.  In particular, there is a twist action on 2-morphisms. All of our examples are in fact symmetric.

\begin{ex}The main example for this paper will be $\permmc$, the multicategory of permutative categories and multilinear maps. This multicategory is the subject of Section~\ref{sect:pairings}.
\end{ex}

\begin{ex}\label{monoidalismulti} A monoidal category $(\V,\otimes)$ has an underlying multicategory structure.  The objects of the multicategory are the objects of $\V$, and the $k$-morphisms are given by morphisms $a_1\otimes \cdots \otimes a_k\to b$ in $\V$. If $\V$ is symmetric as a monoidal category, it is also symmetric as a multicategory. Many of the multicategories used in this paper arise in this way.  In this case, we use the same notation for the monoidal category and the multicategory.
\end{ex}

Given multicategories $\M$ and $\N$, a \emph{multifunctor} $F\colon \M \to \N$ consists of an assignment of objects $F\colon ob\M \to ob\N$, and for objects $a_1,\dots,a_k,b$ of $\M$, a function on morphism sets
\[ F\colon \M(a_1,\dots,a_k;b)\to \N(F(a_1),\dots,F(a_k);F(b)),\]
compatible with identity and composition.

\begin{ex}If $\C$ and $\D$ are monoidal categories, a  lax monoidal functor $F\colon \C \to \D$ canonically gives rise to a multifunctor of the corresponding multicategories. 
\end{ex}

We next define the notion of enrichment in a multicategory. Let $\M$ be a multicategory. 

\begin{defn}An \emph{$\M$-enriched category} $\C$ consists of a collection of objects, $ob\C$, and for objects $a,b$ in $\C$,  a morphism object  $\C(a,b)$ which is an object of $\M$. For objects $a,b,c$ of $\C$, composition is given by a 2-morphism in $\M$,
\[\comp \colon (\C(b,c),\C(a,b))\to \C(a,c),\]
and the identity is given by a 0-morphism in $\M$ with the empty sequence as source,
\[\id_a\colon ()\to \C(a,a).\]
These morphisms are subject to the usual associativity and identity constraints, which are expressed as follows. Given objects $a,b,c,d$ in $\C$, the following diagram in $\M$ commutes:
\begin{equation}\label{eqn:assoc}
\xymatrixcolsep{2cm}\xymatrix{
(\C(c,d),\C(b,c),\C(a,b)) \ar[r]^-{(\comp,1_{\C(a,b)})} \ar[d]_{(1_{\C(c,d)},\comp)} & (\C(b,d),\C(a,b))\ar[d]^{\comp}\\
(\C(c,d),\C(a,c)) \ar[r]_-{\comp} & \C(a,d).
}
\end{equation}
For objects $a$ and $b$, the diagram
\begin{equation}\label{eqn:identity}
\xymatrixcolsep{2cm}\xymatrixrowsep{1.5cm}\xymatrix{
(\C(a,b))\ar[r]^-{(\id_b,1_{\C(a,b)})} \ar[d]_{(1_{\C(a,b)},\id_a)} \ar[rd]^-{1_{\C(a,b)}} & (\C(b,b),\C(a,b))\ar[d]^{\comp}\\
(\C(a,b),\C(a,a)) \ar[r]_-{\comp} & \C(a,b)
}
\end{equation}
commutes.
\end{defn}

\begin{rem} If the enriching multicategory $\M$ is symmetric, any $\M$-enriched category $\C$ has a well-defined opposite category $\C^\op$.  The objects of $\C^\op$ are the same as the objects of $\C$; morphism objects in $\C^\op$ are defined by setting $\C^\op(a,b)$ equal to $\C(b,a)$.  The definition of composition in $\C^\op$ requires use of the twist action on 2-morphisms in $\M$.  This is a generalization of what happens in the case of enrichments in monoidal categories: here again, the existence  of opposite categories follows from  a symmetric monoidal structure on the enriching category. 
\end{rem}

\begin{defn}\label{defn:enrfunctor}
Let $\C$ and $\D$ be $\M$-enriched categories. An \emph{$\M$-enriched functor} $F\colon \C \to \D$ consists of an assignment $F\colon ob\C \to ob\D$, and for each pair of objects $a$ and $b$ in $\C$, a morphism in $\M$,
\[F\colon \C(a,b) \to \D(Fa,Fb),\]
compatible with composition and identity, as expressed by the commutativity of the diagrams
\[
\xymatrix{
(\C(b,c),\C(a,b))\ar[r]^-{\comp} \ar[d]_{(F,F)} & \C(a,c) \ar[d]^{F}\\
(\D(Fb,Fc),\D(Fa,Fb)) \ar[r]_-{\comp} & \D(Fa,Fc),
}
\]
and
\[
\xymatrix{
() \ar[r]^-{\id_a}\ar[dr]_-{\id_{Fa}}&  \C(a,a) \ar[d]^{F}\\
& \D(Fa,Fa).
}
\]
\end{defn}

We can similarly define $\M$-enriched natural transformations. The collection of $\M$-enriched categories, $\M$-enriched functors and $\M$-enriched natural transformations forms a 2-category, denoted $\M$-$\cat$. 
\begin{rem}
Note that for a monoidal category $\V$, the notions of enrichment over $\V$ as a monoidal category and as a multicategory coincide, so that the term $\V$-$\cat$ is unambiguous.
\end{rem}

One of the basic constructions in the sequel is changing enrichments along a multifunctor. 
\begin{prop}\label{prop:basechange} A multifunctor $F\colon \M \to \N$ induces a 2-functor
\[F_{\bullet} \colon \M\text{-}\cat \to \N\text{-}\cat,\]
which sends an $\M$-enriched category $\C$ to the $\N$-enriched category $F_\bullet\C$ with the same collection of objects as $\C$, and for which  $(F_{\bullet}\C)(a,b)$ is given by $F(\C(a,b))$.  Again, this generalizes the situation of enrichment over a monoidal category.
\end{prop}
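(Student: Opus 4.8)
The plan is to define $F_\bullet$ on each of the three layers of the $2$-category $\M$-$\cat$ --- enriched categories, enriched functors, and enriched natural transformations --- by ``applying $F$ everywhere,'' and then to check that every required axiom is inherited from the multifunctoriality of $F$. The single conceptual point underlying all of it is that a multifunctor preserves operadic composition and identity $k$-morphisms, and hence carries any diagram assembled out of these operations to a diagram of the same shape; in particular, a commuting diagram in $\M$ maps to a commuting diagram in $\N$.

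On objects: given an $\M$-enriched category $\C$, set $ob(F_\bullet\C)=ob\C$ and $(F_\bullet\C)(a,b)=F(\C(a,b))$. The composition of $F_\bullet\C$ is the $2$-morphism $F(\comp)\colon(F\C(b,c),F\C(a,b))\to F\C(a,c)$ and the identity is the $0$-morphism $F(\id_a)\colon()\to F\C(a,a)$; both are legitimate because $F$ sends $k$-morphisms to $k$-morphisms. To see that $F_\bullet\C$ is an $\N$-enriched category I would apply $F$ to the associativity diagram \eqref{eqn:assoc} and the identity diagram \eqref{eqn:identity} for $\C$. In the tuple-of-morphisms notation introduced just after the definition of a multicategory, each arrow in those diagrams is a tuple of structure maps and identities, and each commuting cell records an equality of operadic composites; since $F$ preserves operadic composition and identities, the image diagrams commute in $\N$, e.g.\ $F(\comp)\circ(F(\comp),1_{F\C(a,b)})=F(\comp)\circ(1_{F\C(c,d)},F(\comp))$.

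On $1$-morphisms: for an $\M$-enriched functor $H\colon\C\to\D$, define $F_\bullet H\colon F_\bullet\C\to F_\bullet\D$ to agree with $H$ on objects and to be $F(H\colon\C(a,b)\to\D(Ha,Hb))$ on morphism objects; compatibility with composition and identities is obtained by applying $F$ to the two diagrams in Definition \ref{defn:enrfunctor}. Likewise, an $\M$-enriched natural transformation is given by component $0$-morphisms together with a naturality diagram, so applying $F$ componentwise yields an $\N$-enriched natural transformation between the images. Finally, to conclude that $F_\bullet$ is a $2$-functor one checks that it preserves identity enriched functors, composition of enriched functors, and vertical and horizontal composition of enriched natural transformations; in each case the relevant structure map of a composite is literally the operadic composite (or the very same morphism) of the structure maps of its factors, so preservation follows from the action of $F$ on morphisms.

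I do not anticipate a genuine obstacle here: the content is entirely formal. The only place that needs a little care is the translation between the diagrammatic tuple notation used in \eqref{eqn:assoc}--\eqref{eqn:identity} and honest operadic composites in $\M$, so that ``$F$ preserves commutativity'' can be invoked verbatim; once that bookkeeping is in place, every verification is a one-line appeal to the multifunctor axioms. (When $\M$ and $\N$ are symmetric and $F$ is a symmetric multifunctor, the same argument also shows that $F_\bullet$ commutes with the opposite-category construction of the preceding remark, since that construction uses only the twist action on $2$-morphisms, which $F$ respects.)
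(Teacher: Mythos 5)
Your argument is correct, and it is exactly the standard formal verification that the paper leaves implicit: Proposition \ref{prop:basechange} is stated without proof, being the routine generalization of change of enrichment along a lax monoidal functor. Your key observation---that a multifunctor preserves operadic composites and identities and therefore sends the commuting diagrams (\ref{eqn:assoc}), (\ref{eqn:identity}), and those of Definition \ref{defn:enrfunctor} to commuting diagrams in $\N$---is precisely the point, and your treatment of enriched functors, natural transformations, and the 2-functoriality checks is complete and accurate.
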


\section{The multicategory of permutative categories}\label{sect:pairings}

In this section we introduce the multicategory of permutative categories. Recall that a permutative category is a symmetric monoidal category for which the associativity and unit constraints are the identity. For all permutative categories we will denote the monoidal product by $\oplus$, the unit element by 0, and the symmetry isomorphism by $\gamma$. We assume permutative categories, functors between them, and natural transformations are topologically enriched.

The definitions in this section are taken from  those in \cite[\S 3]{EM2006}. The definitions are also closely related to \cite[Definition 10]{HP}. 

\begin{defn}\label{defn:linear}  Let $\A$ and $\B$ be permutative categories. A \emph{strictly unital lax symmetric monoidal functor} $(f,\delta)\colon \A\to\B$ consists of a functor $f\colon \A\to\B$ of underlying categories, together with a  natural transformation
 \[
  \delta \colon   f(a)\oplus f(a')\to f(a\oplus a'),
 \]
called the \emph{structure morphism}.  This data must  satisfy the following conditions: 
 \begin{enumerate}
  \item $f(0)=0$;
  \item if either $a$ or $a'$ is 0, then $\delta$ is the identity morphism;
  \item for all $a,a',a'' \in \A$ , the following diagram commutes:
  \[
   \xymatrix{
 f(a)\oplus f(a') \oplus f(a'')  \ar[r]^-{\id \oplus \delta}  \ar[d]_{\delta \oplus \id} &  f(a)\oplus f(a'\oplus a'')  \ar[d]^{\delta}\\
   f(a\oplus a') \oplus f(a'') \ar[r]_-{\delta} & f(a\oplus a' \oplus a''); 
   }
  \]
  \item for all $a,a'\in \A$, the following diagram commutes:
  \[
   \xymatrix{
f(a)\oplus f(a')\ar[r]^-{\delta} \ar[d]_{\gamma} & f(a\oplus a') \ar[d]^{f(\gamma)}\\
f(a') \oplus f(a) \ar[r]_-{\delta}  & f(a' \oplus a).
   }
  \]
 \end{enumerate}
\end{defn}

\begin{rem}
The structure morphism $\delta$ should be thought of as requiring a ``linearity'' condition on $f$: the conditions above generalize the definition of a linear map between abelian groups to the context of symmetric monoidal categories.  Requiring our functors to be strictly unital is analogous to working with based maps in the context of spaces. This is not a serious restriction; any lax symmetric monoidal functor that preserves the unit up to isomorphism is suitably equivalent to one that is strictly unital.
\end{rem}

\begin{defn}\label{defn:multilinear}
Let $\A_1,\dots, \A_k$ and $\B$ be permutative categories. A \emph{multilinear functor} $(f;\delta_1,\dots \delta_k)\colon (\A_1,\dots ,\A_k)\to\B$ consists of a functor $f\colon \A_1\times \cdots \times \A_k\to\B$ of underlying categories, together with  natural transformations
\[\delta_i \colon f(a_1,\dots, a_i, \dots, a_k) \oplus  f(a_1,\dots, a'_i, \dots, a_k) \to  f(a_1,\dots, a_i \oplus a'_i, \dots, a_k)\]
called  \emph{linearity constraints}.  These constraints must satisfy conditions given in  \cite[Definition 3.2]{EM2006}. In particular, $f(a_1,\dotsc, a_k)=0$ if any $a_j=0$.  Additionally, the transformation $\delta_i$ is the identity map if either $a_i$, $a_i'$ or any other $a_j$ is $0$.
\end{defn}

The natural transformations $\delta_i$ should be thought of as saying that the functor $f$ is linear in each variable, up to a constraint given by $\delta_i$. The required conditions encode an associativity of this constraint and a compatibility with the symmetry.  These are similar to conditions (3) and (4) of Definition \ref{defn:linear}. There is one further condition that relates the constraints for different variables. We will denote the multilinear functor $(f; \delta_1, \dots, \delta_n)$ simply by $f$ when there is no risk of confusion.

\begin{rem} When $k=1$, Definition \ref{defn:multilinear} coincides with Definition \ref{defn:linear}. For $k=2$, Definition \ref{defn:multilinear} is a slight generalization of the notion of pairing defined in \cite{maypairings} and used implicitly in \cite{Guillou2010}. 
\end{rem}

\begin{defn} Let $\permmc$ be the multicategory whose objects are small permutative categories and whose multimorphisms are given by multilinear functors between them.   A  0-morphism $a\colon ()\to \A$ is given by a choice of object $a$ in $\A$.  Given multilinear functors $(g;\{\delta _j^g\}_{j=1}^n)\colon (\B_1,\dots, \B_n) \to \C$ and $(f_j;\{\delta ^{f_j}_i\}_{i=1}^{k_j})\colon (\A_j^1,\dots ,\A_j^{k_j})\to B_j$ for $j=1,\dots ,n$, we define their composite by
\[
(g;\{\delta _j^g\}_{j=1}^n)\circ \left((f_1;\{\delta ^{f_1}_i\}_{i=1}^{k_1}),\dots, (f_n;\{\delta ^{f_n}_i\}_{i=1}^{k_n})\right)=(g\circ(f_1\times \cdots \times f_n); (\delta_s)_{s=1}^{\sum k_j})
\]
where $\delta_s$ is given by the appropriate composite of $\delta_j ^g$ with $g(\delta _i ^{f_j})$. See \cite[Definition 3.2]{EM2006} for details.
\end{defn}
This definition of composition generalizes that of composition of lax  monoidal functors, which we recall here to orient the reader. Let $(g;\delta ^g)\colon \B \to \C$ and $(f; \delta ^f)\colon \A \to \B$ be strictly unital lax  monoidal functors. Their composite is given by $(g\circ f; \delta)$, where $\delta$ is the natural transformation given by the composite
\[ g(f(a))\oplus g(f(a')) \xrightarrow{\delta ^g} g(f(a)\oplus f(a')) \xrightarrow{g(\delta ^f)} g(f(a\oplus a')).\]

Our work requires a $K$-theory machine that respects the multicategory structure on permutative categories.  The following theorem makes this requirement precise.
 \begin{thm}[\textit{cf}.\ {\cite[Theorem 1.6, Theorem 2.1]{maypairings}, \cite[Theorem 1.1]{EM2006}}]\label{Kpairs} 
There is a  $K$-theory functor $\K$ from permutative categories to spectra has the structure of a multifunctor
\[\K \colon \permmc \to \Spec. \]
\end{thm}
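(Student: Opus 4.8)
The plan is to recall the structure of the chosen $K$-theory machine $\K$ and then exhibit the multilinear data making it a multifunctor. Recall that $\K$ factors through an infinite loop space machine: to a permutative category $\A$ one first associates, via a two-sided bar construction over an $E_\infty$ operad (or, in other models, a special $\Gamma$-category built from the symmetric monoidal structure) an $E_\infty$-space whose group completion is the zeroth space of the spectrum $\K(\A)$. This assignment is the object function of the desired multifunctor, and for $k=1$ the functoriality of $\K$ on strictly unital lax symmetric monoidal functors is the standard functoriality of the machine. The content of the theorem is thus the construction, for each multilinear functor $(f;\delta_1,\dots,\delta_k)\colon(\A_1,\dots,\A_k)\to\B$, of a $k$-morphism $\K(\A_1)\sma\cdots\sma\K(\A_k)\to\K(\B)$ in $\Spec$, together with the verification that these assignments are $\Sigma_k$-equivariant and compatible with composition and identities.

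First I would fix the model for $\K$ and its target $\Spec$, and recall the comparison, intrinsic to May's framework, relating the machine applied to a product of permutative categories to a smash-type pairing of the machine outputs; in the operadic approach this is precisely where the operad pairings and category-of-operators bookkeeping enter. Next I would show that a multilinear functor $f$, being additive in each variable up to the coherent constraints $\delta_i$, induces on the associated $E_\infty$-spaces a map compatible with this comparison, and hence on spectra a map $\K(f)\colon\K(\A_1)\sma\cdots\sma\K(\A_k)\to\K(\B)$. One checks that for $k=0$ this recovers the unit $\mathbb{S}\to\K(\A)$ determined by the chosen object $a\in\A$, and for $k=2$ it recovers May's pairing. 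The symmetric group equivariance is then inherited from the equivariance of the comparison map under permuting the smash factors, and compatibility with composition is verified by unwinding the definition of composition in $\permmc$ — where the linearity constraints $\delta_s$ of $g\circ(f_1\times\cdots\times f_n)$ are the prescribed composites of the $\delta^g_j$ with $g(\delta^{f_j}_i)$ — and matching this against composition of the induced maps, using functoriality of the bar construction and associativity of the operad pairings; identities are immediate.

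The main obstacle is the construction and naturality of the comparison between $\K$ of a product and a smash product of $\K$'s, together with the proof that multilinear functors respect it: this is the genuine technical heart of pairings in $K$-theory and is sensitive to the choice of machine. The case $k=2$ is treated in \cite{maypairings}, the general multifunctorial statement is in \cite{EM2006}, and a complete account for the present machine will appear in \cite{MMO}; once such a machine is in hand, the equivariance and composition checks are formal coherence verifications. Alternatively one could obtain the statement by transporting the Elmendorf--Mandell multifunctor to symmetric spectra along a lax symmetric monoidal, hence multifunctorial, comparison from symmetric spectra to $\Spec$, though to apply the equivariant foundations of \cite{GM2011} directly one prefers their $K$-theory machine.
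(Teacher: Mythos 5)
Your proposal is essentially the paper's treatment: Theorem~\ref{Kpairs} is not proved in the paper but imported from \cite{maypairings} and \cite{EM2006} (with full details deferred to \cite{MMO}), and your sketch correctly identifies both the structure of the required multilinear data and the fact that the genuine technical content---the comparison between the machine applied to products and the smash pairing of outputs---lives in exactly those references. Your closing remark about preferring May's machine over transporting the Elmendorf--Mandell multifunctor, for compatibility with \cite{GM2011}, matches the paper's own discussion.
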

Here we are viewing the monoidal category of spectra under smash product as a multicategory as in Example \ref{monoidalismulti}.

We will explicitly use two important consequences of the multifunctoriality of $\K$.
\begin{cor}\label{cor:Kofmultimap}
A multilinear functor of permutative categories \[f\colon (\A_1,\dots ,\A_k)\to\B \] induces a map of spectra
\[ \K(\A_1)\sma \cdots \sma \K(\A_k)\to \K(\B),\]
which we denote by $\K(f)$. Furthermore, this assignment is compatible with composition of multilinear maps.  
\end{cor}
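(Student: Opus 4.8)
The plan is to obtain both statements directly from the multifunctoriality of $\K$ established in Theorem \ref{Kpairs}, using the identification of the smash product of spectra with the multicategorical composition structure described in Example \ref{monoidalismulti}. First I would recall that, by Example \ref{monoidalismulti}, viewing $(\Spec, \sma)$ as a (symmetric) multicategory means that a $k$-morphism $(E_1,\dots,E_k)\to F$ in $\Spec$ is precisely a map of spectra $E_1\sma\cdots\sma E_k\to F$. Since $\K\colon \permmc\to\Spec$ is a multifunctor, it sends a $k$-morphism $f\colon(\A_1,\dots,\A_k)\to\B$ in $\permmc$ — that is, a multilinear functor — to a $k$-morphism $\K(f)\colon(\K(\A_1),\dots,\K(\A_k))\to\K(\B)$ in $\Spec$, which unwinds to exactly the asserted map of spectra $\K(\A_1)\sma\cdots\sma\K(\A_k)\to\K(\B)$. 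This establishes the first assertion.

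For the compatibility with composition, I would invoke the composition-preservation axiom for the multifunctor $\K$. Given multilinear functors $g\colon(\B_1,\dots,\B_n)\to\C$ and $f_j\colon(\A_j^1,\dots,\A_j^{k_j})\to\B_j$ for $j=1,\dots,n$, the multifunctor axiom asserts that
\[
\K\bigl(g\circ(f_1,\dots,f_n)\bigr) \;=\; \K(g)\circ\bigl(\K(f_1),\dots,\K(f_n)\bigr)
\]
as $\bigl(\sum_j k_j\bigr)$-morphisms in $\Spec$. Translating the right-hand side through Example \ref{monoidalismulti}, the composition in the multicategory $(\Spec,\sma)$ is given by smashing the maps $\K(f_j)$ together and then composing with $\K(g)$, using the associativity and unit coherence isomorphisms of the smash product to identify the relevant iterated smash products. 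Thus the equality above says precisely that the map of spectra induced by the composite multilinear functor agrees with the evident composite of induced maps of spectra. In the special case $k=2$, $n=1$ this is the statement that $\K(g\circ f) = \K(g)\circ\K(f)$ for the induced maps on smash products.

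I do not expect any serious obstacle here, as this corollary is essentially a matter of unwinding definitions once Theorem \ref{Kpairs} is granted; the only mild subtlety is bookkeeping the coherence isomorphisms for $\sma$ that are implicit in treating $(\Spec,\sma)$ as a multicategory, but these are part of the standard passage from a symmetric monoidal category to its underlying multicategory recorded in Example \ref{monoidalismulti}, so no additional argument is required. One could alternatively phrase the whole corollary as simply ``apply $\K_\bullet$ to the evident subcategories,'' but the direct unwinding above is the cleanest route.
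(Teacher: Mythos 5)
Your proposal is correct and follows exactly the route the paper intends: the corollary is stated as an immediate consequence of Theorem \ref{Kpairs}, obtained by unwinding what a $k$-morphism in the multicategory underlying $(\Spec,\sma)$ is (Example \ref{monoidalismulti}) and invoking the composition-preservation axiom of a multifunctor. The paper gives no separate argument, so your unwinding matches its implicit proof.
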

 
Recall that a 0-morphism $b\colon ()\to \B$ in $\permmc$ is given by a choice of an object $b\in B$.  The inclusion of the object $b$ into $\B$ extends to a strict symmetric monoidal functor
\[i_b\colon \mathcal{F}\to \B\]
where $\mathcal{F}$ is the free permutative category on a single object. The category $\mathcal{F}$ is a skeletal version of the category of finite sets and isomorphisms, and the Barratt--Priddy--Quillen theorem thus identifies $\K(\mathcal{F})$ as the sphere spectrum $S$. The following lemma follows from the proof of Theorem \ref{Kpairs} that is given by Elmendorf--Mandell \cite{EM2006}.

\begin{lemma}\label{lemma:Kon0morph}
Let $b\colon ()\to \B$ be a 0-morphism in $\permmc$.  Then $\K(b)\colon S\to \K(\B)$ is the 0-morphism in $\Spec$ given by $\K(i_b)\colon S\to \K(\B)$
\end{lemma}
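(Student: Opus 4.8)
The plan is to unwind the definitions of the $K$-theory multifunctor $\K$ and of $0$-morphisms on both sides, and show that the two constructions of $\K(b)$ agree because they both factor through the free permutative category $\mathcal{F}$. First I would recall that a $0$-morphism $b\col ()\to \B$ in $\permmc$ is, by definition, a choice of object $b\in \B$, and that this object determines a strict symmetric monoidal functor $i_b\col \mathcal{F}\to \B$ sending the generating object of $\mathcal{F}$ to $b$ (the monoidal coherence of $\mathcal{F}$ forces this extension to be unique and strict). Applying the multifunctor $\K$ to the $1$-ary morphism $i_b$ gives a map of spectra $\K(i_b)\col \K(\mathcal{F})\to \K(\B)$, and the Barratt--Priddy--Quillen identification $\K(\mathcal{F})\simeq S$ lets us regard this as a map $S\to \K(\B)$, i.e.\ as a $0$-morphism in $\Spec$.

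Next I would examine what the multifunctor $\K$ does to the $0$-morphism $b\col ()\to\B$ directly. Since $\K$ is a multifunctor, it sends $\K(\B)$'s defining data compatibly; in particular it sends the $0$-ary morphism $b$ to a $0$-ary morphism $\K(b)\col ()\to \K(\B)$ in $\Spec$, which (viewing $\Spec$ as a multicategory via its smash-product monoidal structure, as in Example~\ref{monoidalismulti}) is precisely a map $S\to\K(\B)$. The claim is that this coincides with $\K(i_b)$ after the BPQ identification. The key observation is that, at the level of $\permmc$, the $0$-morphism $b$ factors as the composite
\[
() \xrightarrow{\ \ast\ } \mathcal{F} \xrightarrow{\ i_b\ } \B,
\]
where $\ast\col ()\to\mathcal{F}$ is the $0$-morphism picking out the generating object of $\mathcal{F}$; this is immediate from the universal property of $\mathcal{F}$ as the free permutative category on one object. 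Applying the multifunctor $\K$ and using compatibility with composition (Corollary~\ref{cor:Kofmultimap} in the appropriate form), we get $\K(b) = \K(i_b)\circ \K(\ast)$. It then remains to identify $\K(\ast)\col ()\to \K(\mathcal{F})$, i.e.\ $S\to\K(\mathcal{F})$, with the BPQ equivalence itself — equivalently, to check that under the identification $\K(\mathcal{F})\simeq S$, the canonical $0$-morphism $S\to\K(\mathcal{F})$ coming from the generating object is the unit map. This is exactly a statement buried in the Elmendorf--Mandell construction of $\K$ and its proof of the multifunctor structure: their construction of the equivalence $\K(\mathcal{F})\simeq S$ is arranged so that the basepoint/unit-level data matches the $0$-ary cell corresponding to the free generator.

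The main obstacle I expect is the last step: making precise the compatibility of the Barratt--Priddy--Quillen equivalence $\K(\mathcal{F})\simeq S$ with the unit/$0$-morphism structure, since this depends on the internal details of the particular $K$-theory machine (here the May/Guillou--May machine, though the Elmendorf--Mandell version behaves identically). Concretely, one must verify that the $0$-cell of the $\Gamma$-space (or whatever indexing category the machine uses) built from $i_b$ agrees on the relevant subcomplex with the standard sphere spectrum structure maps; this is the content of ``follows from the proof of Theorem~\ref{Kpairs}'' in the statement. Everything else — the factorization of $b$ through $\mathcal{F}$, and the functoriality of $\K$ on that factorization — is formal and follows directly from the multifunctoriality already established in Theorem~\ref{Kpairs} and Corollary~\ref{cor:Kofmultimap}.
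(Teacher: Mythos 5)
Your proposal is correct in outline, but note that the paper does not actually give a proof of this lemma: it is asserted to ``follow from the proof of Theorem \ref{Kpairs} given by Elmendorf--Mandell,'' so the entire content is deferred to the internals of the $K$-theory machine. Your argument adds a genuinely useful formal layer on top of that citation. The factorization of the $0$-morphism $b$ as $()\to\finset\fto{i_b}\B$ in $\permmc$ is correct (composing a $1$-morphism with a $0$-morphism in $\permmc$ just evaluates the functor on the chosen object, so $i_b\circ\ast=b$), and multifunctoriality of $\K$ then gives $\K(b)=\K(i_b)\circ\K(\ast)$. This reduces the lemma to the single machine-dependent assertion that $\K(\ast)\col S\to\K(\finset)$ is the unit (the identity under the identification $S=\K\finset$), which is exactly the special case $\B=\finset$, $b=\ast$ of the lemma, since $i_\ast=\id_{\finset}$ by freeness of $\finset$. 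That residual claim cannot be obtained formally from multifunctoriality alone; it is precisely what the cited proof supplies, and you correctly flag it as the one real obstacle. So your route buys a clean separation of the formal part (factorization through $\finset$ plus compatibility with composition) from the machine-specific part, whereas the paper buys brevity by citing the whole statement. One small caution: the paper explicitly states that no comparison between the Elmendorf--Mandell and May multiplicative structures is known and elsewhere relies on the May/Guillou--May machine, so your parenthetical that the two machines ``behave identically'' here is not something the paper claims; but since the paper itself attributes this lemma to the Elmendorf--Mandell proof, your deferral lands in the same place as the paper's.
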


\begin{rem}
Theorem \ref{Kpairs} originates in \cite{maypairings}. However, the exposition in \cite{maypairings} is tailored for understanding the multiplicative structures on ring spectra, rather than the more general multilinear maps we use here. Together with modern advances in constructing categories of spectra with good monoidal structures, this makes \cite{maypairings} somewhat difficult to read.  As mentioned in the introduction, Elmendorf and Mandell \cite{EM2006} also produce a $K$-theory machine with the multifunctoriality of Theorem \ref{Kpairs}---their work is the origin of the multifunctor perspective on the subject.  Currently there is no comparison between the multiplicative-type structure produced by \cite{EM2006} and \cite{maypairings}; since we rely on the main theorem of \cite{GM2011}, which is proved using the pairings of \cite{maypairings}, we must also use the theory of \cite{maypairings}.  This theory has been modernized and expanded in the forthcoming \cite{MMO}, which will provide a more accessible resource, but the fundamental constructions are those of \cite{maypairings}.
\end{rem}

\section{$\PC$-categories}\label{sect:PCcat}

Our construction makes heavy use of categories enriched in the multicategory $\permmc$, which we call $\PC$-categories after Guillou \cite{Guillou2010}.  In this section, we describe their structure in a more explicit way.

\begin{defn} A $\PC$-\emph{category} is a category enriched in the multicategory $\permmc$. Similarly, a $\PC$-\emph{functor} is an enriched functor, and a $\PC$-\emph{natural transformation} is an enriched natural transformation.
\end{defn}

We now unpack some of these definitions for the reader. A $\PC$-category  consists of the following data:
\begin{enumerate}
\item a collection of objects, denoted by $ob\pcC$;
\item for each pair of objects $X,Y$, a permutative category $\pcC(X,Y)$;
\item for every triple of objects $X,Y,Z$, a bilinear map 
\[ \circ: (\pcC(Y,Z),\pcC(X,Y))\to \pcC(X,Z);\]
\item for every object $X$ in $ob\pcC$, an object $\id_X$ of $\pcC(X,X)$;
\end{enumerate}
such that Diagrams (\ref{eqn:assoc}) and (\ref{eqn:identity}) commute.

Note that if we forget the permutative structure on the categories of morphisms,  a $\PC$-category has an underlying 2-category.

\begin{rem} Our definition of a $\PC$-category is slightly more general than the one in \cite[\S 3]{Guillou2010}, in which one of the distributivity isomorphisms of the bilinear map $\circ$ in (3) is chosen to be the identity. By \cite[Theorem 1.2]{Guillou2010}, any $\PC$-category in our sense is appropriately equivalent to a $\PC$-category in Guillou's sense.
\end{rem}

Composition in $\pcC$ gives rise to precomposition and postcomposition maps. More explicitly, given objects $X$, $Y$ and $Z$ in the $\PC$-category $\pcC$, and an object $f$ of the permutative category $\pcC(X,Y)$, there exist strictly unital lax symmetric monoidal functors
\[ f^{\ast} \colon \pcC(Y,Z)\to \pcC(X,Z) \quad \text{and} \quad f_{\ast} \colon \pcC(Z,X) \to \pcC(Z,Y),\]
given by precomposition and postcomposition by $f$ respectively. The monoidal constraints are defined by using the distributivity constraints of $\circ$.

 A $\PC$-functor $F\colon \pcC \to \pcD$ consists of an assignment of objects $F \colon ob \pcC \to ob\pcD$ together with strictly unital lax symmetric monoidal functors
\[F_{X,Y}\colon \pcC (X,Y) \to \pcD (FX, FY)
\]
such that the diagrams in Definition \ref{defn:enrfunctor} commute.

Given  $\PC$-functors $F$ and $G$ from $\pcC$ to $\pcD$, a $\PC$-natural transformation $\alpha\colon F \to G$ consists of a collection of morphisms in $\pcD$, 
\[ \alpha _X \colon F(X) \to G(X),\]
such that the following diagram in $\permmc$ commutes
\[
\xymatrix{
\pcC(X,Y) \ar[r]^{G} \ar[d]_{F} & \pcD(GX,GY)\ar[d]^{(\alpha _X)^{\ast}}\\
\pcD(FX,FY)\ar[r]_{(\alpha_Y)_{\ast}} & \pcD(FX,GY).
}
\]

The following theorem follows directly from Proposition \ref{prop:basechange} and Theorem \ref{Kpairs}.

 \begin{thrm}[\textit{cf}.\ {\cite[Theorem 1.1]{Guillou2010}}] \label{KthryofPCcat} 
There is a 2-functor
\[
\K _{\bullet} \colon \PC\textrm{-}Cat \to \Spec\textrm{-}Cat\]
that sends a $\PC$-category $\pcC$ to a spectrally enriched category with the same collection of objects and with morphism spectra from $X$ to $Y$ given by $\K \pcC(X,Y)$.
\end{thrm}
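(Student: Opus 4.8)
The plan is to deduce Theorem \ref{KthryofPCcat} as a direct instance of the change-of-enrichment machinery set up in Proposition \ref{prop:basechange}, applied to the multifunctor $\K$ of Theorem \ref{Kpairs}. First I would observe that Theorem \ref{Kpairs} provides a multifunctor $\K\colon \permmc\to\Spec$, where $\Spec$ is viewed as a multicategory via its smash-product monoidal structure as in Example \ref{monoidalismulti}. By Proposition \ref{prop:basechange}, any multifunctor $F\colon\M\to\N$ induces a 2-functor $F_\bullet\colon\M\text{-}\cat\to\N\text{-}\cat$ with $(F_\bullet\C)(a,b)=F(\C(a,b))$. Taking $\M=\permmc$ and $\N=\Spec$, this yields the desired 2-functor $\K_\bullet\colon\PC\text{-}\cat\to\Spec\text{-}\cat$, since by definition a $\PC$-category is a $\permmc$-enriched category and a spectrally enriched category is a $\Spec$-enriched category. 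On objects, $\K_\bullet$ leaves the object collection unchanged; on hom-objects it sends the permutative category $\pcC(X,Y)$ to the spectrum $\K\pcC(X,Y)$, exactly as claimed.

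The only thing to check beyond quoting Proposition \ref{prop:basechange} is that the target $\Spec\text{-}\cat$ really is the category of \emph{spectrally enriched} categories in the sense used elsewhere in the paper, i.e.\ that enrichment in the monoidal category of spectra agrees with enrichment in the underlying multicategory of spectra. This is precisely the content of the Remark following Definition \ref{defn:enrfunctor}: for a monoidal category $\V$, the notions of enrichment over $\V$ as a monoidal category and as a multicategory coincide, so $\V\text{-}\cat$ is unambiguous; applying this with $\V=\Spec$ identifies the output of $\K_\bullet$ with an honest spectrally enriched category, spectrally enriched functor, or spectrally enriched natural transformation.

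Concretely, then, the steps are: (1) invoke Theorem \ref{Kpairs} to get $\K\colon\permmc\to\Spec$ as a multifunctor; (2) invoke Proposition \ref{prop:basechange} to get the induced 2-functor $\K_\bullet$ on enriched-category 2-categories, with the stated effect on objects and hom-objects; (3) invoke the Remark after Definition \ref{defn:enrfunctor} to identify $\Spec$-enrichment-as-multicategory with $\Spec$-enrichment-as-monoidal-category, so that the target is the 2-category of spectrally enriched categories in the usual sense. One should also note, for the reader's orientation, that under this correspondence the bilinear composition map $\circ\colon(\pcC(Y,Z),\pcC(X,Y))\to\pcC(X,Z)$ of a $\PC$-category is sent by $\K$ to a map of spectra $\K\pcC(Y,Z)\sma\K\pcC(X,Y)\to\K\pcC(X,Z)$ as in Corollary \ref{cor:Kofmultimap}, and the object $\id_X\in\pcC(X,X)$, viewed as a $0$-morphism $()\to\pcC(X,X)$ in $\permmc$, is sent to the unit map $S\to\K\pcC(X,X)$ via Lemma \ref{lemma:Kon0morph}; the associativity and unit diagrams \eqref{eqn:assoc} and \eqref{eqn:identity} for $\pcC$ map to the corresponding diagrams for $\K_\bullet\pcC$ because $\K$ is a multifunctor.

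There is essentially no hard step here: the theorem is genuinely a formal corollary, and the only mild subtlety—the one place where I would be careful—is making the monoidal-versus-multicategory identification of $\Spec\text{-}\cat$ explicit rather than letting it pass silently, since that is what licenses calling the image category ``spectrally enriched.'' Everything else is bookkeeping already packaged into Proposition \ref{prop:basechange}.
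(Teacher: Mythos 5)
Your proposal is correct and is exactly the paper's argument: the paper states immediately before Theorem \ref{KthryofPCcat} that it "follows directly from Proposition \ref{prop:basechange} and Theorem \ref{Kpairs}," which is precisely your change-of-enrichment argument. Your extra remarks (the monoidal-versus-multicategory identification of $\Spec$-$\cat$ and the explicit effect on composition and units) only make explicit what the paper leaves implicit.
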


\section{The $\PC$-category $\perm$}\label{sect:perm}
Let $\perm$ denote the 2-category of small permutative categories, strictly unital lax symmetric monoidal functors and monoidal natural transformations. As noted by Guillou \cite[Example 3.4]{Guillou2010}, this 2-category has the structure of a $\PC$-category.  In particular, the morphism category $\perm(\A,\B)$ between any two permutative categories is itself a permutative category. 

Concretely, the monoidal structure on $\perm(\A,\B)$ is given by pointwise addition so that $(f\oplus g)(a)$ is defined to be $f(a)\oplus g(a)$.  The structure morphism for $f\oplus g$  is given by the composite
\[ f(a)\oplus g(a) \oplus f(a') \oplus g(a') \xrightarrow{1\oplus \gamma \oplus 1} f(a)\oplus f(a') \oplus g(a) \oplus g(a') \xrightarrow{\delta^f \oplus \delta^g} f(a\oplus a') \oplus g(a\oplus a').\]
One easily checks that this gives $\perm(\A,\B)$ the structure of a permutative category.

Given permutative categories $\A$, $\B$ and $\C$, the usual composition functor extends to a map
\[ \circ \colon \left(\perm(\B, \C), \perm(\A, \B)\right) \to \perm(\A, \C)\]
which is bilinear in the sense of Definition \ref{defn:multilinear}. 
The first linearity constraint $\delta_1$ is the identity, since it follows from the definitions that $(g\circ f) \oplus (g'\circ f)=(g\oplus g')\circ f$. The second linearity constraint
\[ \delta_2\colon  (g\circ f) \oplus (g \circ f') \to g\circ(f\oplus f')\]
is given by the structure morphism of $g$. It is routine to check that the necessary diagrams commute.

\begin{rem}
We require the objects in $\perm(\A,\B)$ to be strictly unital functors so that the composition bilinear map is also strictly unital, which is required in Definition \ref{defn:multilinear}. The reason for this requirement is that the available $K$-theory machines are only multifunctorial with respect to strictly unital multilinear maps. 
\end{rem}

The $\PC$-category $\perm$ has a bilinear evaluation map. This structure is important in the construction of the functor of Theorem \ref{thrm:constructk}.

\begin{prop}\label{prop:evpairing}Given permutative categories $\A$ and $\B$, there exists an evaluation map 
\[ev\colon (\perm(\A,\B), \A) \to \B\]
which is a bilinear functor in the sense of Definition \ref{defn:multilinear}.
\end{prop}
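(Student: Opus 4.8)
The plan is to define the evaluation map on objects by $ev(f, a) = f(a)$, and on morphisms by $ev(\phi, \alpha) = \phi_{a'} \circ f(\alpha) = g(\alpha)\circ \phi_a$ for a morphism $\phi\colon f\to g$ in $\perm(\A,\B)$ and $\alpha\colon a\to a'$ in $\A$; these two expressions agree by naturality of $\phi$. Functoriality of $ev\colon \perm(\A,\B)\times\A\to\B$ is then a routine check. The substance of the statement is the construction of the two linearity constraints $\delta_1$ and $\delta_2$ and the verification that they satisfy the axioms of Definition~\ref{defn:multilinear}, so I would spend the bulk of the proof there.

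First I would handle the first variable. Recalling that the monoidal structure on $\perm(\A,\B)$ is pointwise, we have $(f\oplus f')(a) = f(a)\oplus f'(a)$ by definition, so $ev(f\oplus f', a) = f(a)\oplus f'(a) = ev(f,a)\oplus ev(f',a)$ \emph{on the nose}. Hence $\delta_1$ can be taken to be the identity natural transformation. The strict-unitality conditions (the constraint is the identity when one input is $0$, and $ev$ sends $0$ in either slot to $0$) are immediate: $ev(0_{\perm(\A,\B)}, a) = 0$ since the zero functor sends everything to $0$, and $ev(f, 0_\A) = f(0_\A) = 0$ because $f$ is strictly unital.

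Next I would build the second constraint. For fixed $f\in\perm(\A,\B)$ the functor $ev(f,-) = f$ is precisely the underlying functor of a strictly unital lax symmetric monoidal functor, so it comes equipped with a structure morphism $\delta^f\colon f(a)\oplus f(a')\to f(a\oplus a')$. I would set $\delta_2 := \delta^f$, i.e.
\[
\delta_2\colon ev(f,a)\oplus ev(f,a') = f(a)\oplus f(a') \xrightarrow{\ \delta^f\ } f(a\oplus a') = ev(f, a\oplus a').
\]
Condition (2) of Definition~\ref{defn:linear}, built into the definition of a strictly unital lax monoidal functor, gives that $\delta^f$ is the identity when $a$ or $a'$ is $0$; and if $f=0$ then $\delta^f$ is trivially the identity, which covers the strict-unitality clause for $\delta_2$. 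The associativity condition for $\delta_2$ alone (in the second variable) is exactly condition (3) of Definition~\ref{defn:linear} for $f$, and the symmetry compatibility is condition (4); both hold because $f$ is symmetric monoidal.

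The main obstacle — the only genuinely nontrivial point — is the mixed compatibility condition relating $\delta_1$ and $\delta_2$ (the "one further condition that relates the constraints for different variables'' referenced after Definition~\ref{defn:multilinear}, spelled out in \cite[Definition~3.2]{EM2006}). Here I would need to check that the two ways of splitting $ev(f\oplus f', a\oplus a')$ — first in the $\perm(\A,\B)$-slot via $\delta_1 = \id$ then in the $\A$-slot via $\delta^{f\oplus f'}$, versus first in the $\A$-slot then in the $\perm(\A,\B)$-slot — agree. Unwinding, this reduces to the identity
\[
\delta^{f\oplus f'}_{a,a'} = (\delta^f_{a,a'}\oplus\delta^{f'}_{a,a'})\circ(1\oplus\gamma\oplus 1),
\]
which is precisely the \emph{definition} of the structure morphism of $f\oplus f'$ recorded in Section~\ref{sect:perm}. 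So the condition holds by construction of the pointwise permutative structure on $\perm(\A,\B)$, and no real computation is needed beyond matching this formula against the EM axiom. I would close by remarking that naturality of $ev$ in both variables, together with these constraints, is exactly the data of a bilinear functor, completing the proof.
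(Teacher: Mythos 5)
Your proposal is correct and follows essentially the same route as the paper: $ev(f,a)=f(a)$, $\delta_1=\mathrm{id}$ coming from the pointwise monoidal structure on $\perm(\A,\B)$, and $\delta_2=\delta^f$ the structure morphism of $f$, with strict unitality handled exactly as in the text. You in fact go slightly further than the paper, which leaves the diagram checks to the reader, by correctly identifying that the mixed compatibility condition reduces to $\delta^{f\oplus f'}=(\delta^f\oplus\delta^{f'})\circ(1\oplus\gamma\oplus 1)$, which holds by the definition of $f\oplus f'$ in Section \ref{sect:perm}.
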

The underlying functor is defined in the usual way on objects and morphisms, so that $ev(f,a)=f(a)$; the key point is to note that strict unitality and the natural transformations  witnessing the lax monoidality of $f$ determine a bilinear structure on $ev$.
More explicitly, the first linearity constraint is the identity, since the equality
\[\delta_1\colon ev(f,a)\oplus ev(f',a)=ev(f\oplus f',a)\]
follows immediately from the definition of $f\oplus f'$, and the second linearity constraint,
\[ \delta_2\colon ev(f,a)\oplus ev(f,a')\to ev(f,a\oplus a'),\] 
is given by the structure morphism of $f$. Similarly, $ev(0,a)=0$ and $ev(f,0)=0$; note the second equality expresses the strict unitality of $f$. 
We leave it to the reader to check that the appropriate diagrams commute.

These evaluations are compatible, in the sense that the following diagram commutes in the multicategory $\permmc$:
\begin{equation}\label{trilineareval}
\xymatrix{(\perm(\B,\C), \perm(\A,\B), \A) \ar[d]_{(\id, ev)}\ar[r]^-{(\circ, \id)}  & (\perm(\A,\C),\A) \ar[d]^{ev}\\
(\perm(\B,\C)\, \B) \ar[r]_-{ev} & \C}
\end{equation}
That this diagram commutes follows from the underlying definition of evaluation; it is straightforward to check that the required diagrams for compatibility of the linearity transformations commute.

Furthermore, evaluation satisfies the following universal property. 
\begin{lemma}\label{lemma:closed} Let $\A$, $\B$ and $\C$ be permutative categories. Let $(f, \delta _1, \delta_2) \colon (\A, \B) \to \C$ be a bilinear map. Then there exists a unique strictly unital lax monoidal functor
\[g\colon A \to \perm(\B,\C) \]
such that the following diagram commutes
\[
\xymatrix @C=4pc{
(\A,\B) \ar[r]^-{(g,1_{\B})} \ar[dr]_{f}&(\perm(\B,\C),\B) \ar[d]^{ev}\\
&\C.
}
\]
\end{lemma}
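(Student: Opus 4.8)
The plan is to run the standard ``currying'' argument, adapted to the bilinear setting: I will build $g$ directly from the components of $f$ and observe that each clause in the definition of a bilinear functor (Definitions \ref{defn:linear} and \ref{defn:multilinear}) is exactly what is needed to turn the resulting data into a strictly unital lax symmetric monoidal functor $\A\to\perm(\B,\C)$, and that the triangle then forces $g$ to be unique.

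First I would define $g$ on objects. For an object $a$ of $\A$, let $g(a)\colon\B\to\C$ be the functor with $g(a)(b)=f(a,b)$ and $g(a)(\beta)=f(1_a,\beta)$ for a morphism $\beta$ of $\B$, equipped with the structure morphism whose component at $(b,b')$ is the second linearity constraint $\delta_2\colon f(a,b)\oplus f(a,b')\to f(a,b\oplus b')$. Strict unitality of $g(a)$ is immediate from the vanishing properties of a bilinear functor ($f(a,0)=0$, and $\delta_2$ is the identity on zero inputs), while the associativity and symmetry diagrams of Definition \ref{defn:linear}(3),(4) for $g(a)$ are exactly the associativity and symmetry conditions imposed on $\delta_2$ in its second-variable slot by Definition \ref{defn:multilinear}. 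Next I would set $g(\alpha)\colon g(a)\to g(a')$, for a morphism $\alpha\colon a\to a'$ of $\A$, to be the natural transformation with component $f(\alpha,1_b)$ at $b$: naturality in $b$ is naturality of $f$; the fact that $g(\alpha)$ is a monoidal natural transformation is the naturality square of $\delta_2$ at the morphism $(\alpha,1_b,1_{b'})$; and functoriality of $g$ follows from functoriality of $f$.

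Then I would equip $g$ with its own lax monoidal structure. Since addition in $\perm(\B,\C)$ is computed pointwise, $(g(a)\oplus g(a'))(b)=f(a,b)\oplus f(a',b)$, and I would take the structure morphism $\delta^g\colon g(a)\oplus g(a')\to g(a\oplus a')$ to have component at $b$ the first linearity constraint $\delta_1\colon f(a,b)\oplus f(a',b)\to f(a\oplus a',b)$. Naturality of $\delta^g$ is naturality of $\delta_1$; strict unitality of $g$ and the associativity and symmetry diagrams for $g$ correspond to the first-variable properties of $\delta_1$. The one genuinely substantive verification here --- and the step I expect to be the main obstacle --- is that $\delta^g$ is a \emph{monoidal} natural transformation: expanding the structure morphism of the pointwise sum $g(a)\oplus g(a')$ recorded in Section \ref{sect:perm}, this equation reduces precisely to the cross-variable compatibility axiom relating $\delta_1$ and $\delta_2$ in a bilinear functor (``the one further condition that relates the constraints for different variables'' of Definition \ref{defn:multilinear}). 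So it holds, but it is the single place where the interaction of the two families of linearity constraints is really used.

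Finally I would check the triangle and uniqueness. On objects $ev(g(a),b)=g(a)(b)=f(a,b)$; comparing the bilinear structures via the composition formula for multilinear functors in $\permmc$, and using that $ev$ has first linearity constraint the identity and second linearity constraint the structure morphism of its first argument, the first and second linearity constraints of $ev\circ(g,1_\B)$ come out to be $\delta_1$ and $\delta_2$ respectively, so $ev\circ(g,1_\B)=f$ as bilinear functors. For uniqueness, if $g'$ also makes the triangle commute then reading the equation $ev\circ(g',1_\B)=f$ off componentwise determines every piece of data of $g'$ --- its effect on objects together with the functors $g'(a)$ on objects, morphisms, and structure morphisms, its effect on morphisms $\alpha$, and its own structure morphism $\delta^{g'}$ --- and each is forced to agree with the corresponding datum of $g$. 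Apart from the monoidality of $\delta^g$ flagged above, all of these are routine diagram chases against the axioms of Definitions \ref{defn:linear} and \ref{defn:multilinear}.
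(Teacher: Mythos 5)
Your proposal is correct and follows essentially the same route as the paper: define $g(a)(b)=f(a,b)$ with structure morphism $\delta_2$, let $g(\alpha)$ have components $f(\alpha,\id_b)$, take $\delta_1$ as the lax monoidal constraint of $g$ itself, and verify the triangle and uniqueness by reading off the components of $ev\circ(g,1_\B)$. You simply spell out the checks the paper calls routine, and your observation that monoidality of $\delta^g$ uses the cross-variable compatibility of $\delta_1$ and $\delta_2$ is a correct identification of where that axiom enters.
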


\begin{proof} On objects, we let $g$ send $a$ in $\A$ to the strictly unital lax monoidal functor given by the composite
\[ \B \fto{(a,1_\B)} (\A,\B) \fto{f} \C.\]
In particular, as an underlying functor, $g(a)(b)=f(a,b)$. A morphism $l\colon a\to a'$ in $\A$ determines a monoidal natural transformation
\[ g(l)\colon g(a)\to g(a')\]
whose component at $b\in \B$ is given by $f(l,\id_b)$. 
It is routine to check that $g$ is strictly unital and lax monoidal, with constraint given by $\delta_1$. It is also easy to check that such a $g$ is unique. 
\end{proof}

\begin{rem}\label{rem:otherwaystoseeclosedmonoidalstuff} Heuristically, Proposition \ref{prop:evpairing} and Lemma \ref{lemma:closed} should be thought of as part of a closed structure on the multicategory $\perm$. This idea is made precise in \cite{closedmulticat}.  The closed structure may also be viewed as arising from an embedding of the multicategory $\perm$ into the closed symmetric monoidal category of ``based multicategories'' which is discussed by Elmendorf--Mandell in \cite{EM2009}.  In order to make this paper more self-contained, we chose to give explicit constructions of the structures needed for our results. 
\end{rem}

\section{A spectrally enriched functor $\kobj\colon\K_\bullet(\perm)\to \Spec$}\label{constructk}

Let $\K_\bullet(\perm)$ denote the spectral category constructed from $\perm$ by Theorem \ref{KthryofPCcat}.  That is, $\K_\bullet(\perm)$ is the category enriched over spectra whose objects are the same as the objects of $\perm$, and for which given permutative categories $\A$ and $\B$, the spectrum of morphisms is given by the $K$-theory spectrum $\K_\bullet(\perm(\A,\B))$.  

We define a spectrally enriched functor $\kobj\colon\K_\bullet(\perm)\to\Spec$, thus proving Theorem \ref{thrm:constructk}. For a permutative category $\A$, $\kobj(\A)$ is the spectrum $\K\A$. The map of spectra
\[
 \kobj\colon \K(\perm(\A,\B))\to F(\K\A,\K\B)
\]
is defined as the adjoint of the map
\[
 \K(ev)\colon \K(\perm(\A,\B))\sma \K\A \to \K\B,
\]
whose existence follows from Proposition \ref{prop:evpairing} and  multilinearity of $\K$ as in Corollary \ref{cor:Kofmultimap} of Theorem \ref{Kpairs}.

By definition of the adjoint we thus have that the following diagram commutes.
 \begin{equation}\label{adjointdiagram}
  \xymatrix{
  \K(\perm(\A,\B))\sma \K\A \ar[d]_{\kobj\sma \id} \ar[rd]^-{\K(ev)}\\
  F(\K\A,\K\B)\sma \K\A \ar[r]_-{ev} & \K\B.
  }
 \end{equation}

\begin{thm} 
The map $\kobj$ defines a spectrally-enriched functor. 
\end{thm}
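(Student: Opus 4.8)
The plan is to verify the two defining diagrams of a spectrally-enriched functor from Definition~\ref{defn:enrfunctor}: compatibility with composition and compatibility with identities. For the identity condition, I would use Lemma~\ref{lemma:Kon0morph}: the identity $0$-morphism $\id_\A\colon ()\to \K_\bullet(\perm)(\A,\A)=\K(\perm(\A,\A))$ is $\K$ applied to the identity object of $\perm(\A,\A)$, i.e.\ the identity functor $\A\to\A$. Under $\kobj$ this must land on the identity $0$-morphism $S\to F(\K\A,\K\A)$, which classifies $\id_{\K\A}$. Unwinding the adjunction in \eqref{adjointdiagram}, this amounts to checking that $\K(ev)$ composed with $\K(\id_\A)\sma\id_{\K\A}\colon S\sma\K\A\to\K(\perm(\A,\A))\sma\K\A$ equals the canonical identification $S\sma\K\A\to\K\A$; and this follows because $ev\circ(\id_\A\times 1_\A)\colon (\{*\},\A)\to\A$ is (up to the $0$-morphism bookkeeping of Lemma~\ref{lemma:Kon0morph}) the identity bilinear map on $\A$, together with functoriality of $\K$ on composites of multilinear functors (Corollary~\ref{cor:Kofmultimap}).

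For the composition condition, the key input is the commuting triangle \eqref{trilineareval} in $\permmc$ expressing associativity of evaluation and composition. Applying the multifunctor $\K$ to \eqref{trilineareval} and using Corollary~\ref{cor:Kofmultimap} yields a commuting diagram of spectra
\[
\xymatrix{
\K(\perm(\B,\C))\sma\K(\perm(\A,\B))\sma\K\A \ar[r]^-{\K(\circ)\sma\id}\ar[d]_{\id\sma\K(ev)} & \K(\perm(\A,\C))\sma\K\A \ar[d]^{\K(ev)}\\
\K(\perm(\B,\C))\sma\K\B \ar[r]_-{\K(ev)} & \K\C.
}
\]
On the other hand, the composition in $\K_\bullet(\perm)$ from $X$ to $Z$ is, by Theorem~\ref{KthryofPCcat}, exactly $\K(\circ)$, and composition in $\Spec$ is composition of maps $F(\K\A,\K\B)$. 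So I would chase the diagram: starting from $\K(\perm(\B,\C))\sma\K(\perm(\A,\B))\sma\K\A$, one route applies $\K(\circ)\sma\id$ then $\kobj\sma\id$ then $ev$, which by \eqref{adjointdiagram} equals $\K(ev)\circ(\K(\circ)\sma\id)$; the other route applies $(\kobj\sma\kobj)\sma\id$, composes in $F(-,-)$, then evaluates, which by two applications of \eqref{adjointdiagram} equals $\K(ev)\circ(\id\sma\K(ev))$. The displayed $\K$-of-\eqref{trilineareval} diagram identifies these two, so after passing back across the $F(\K\A,\K\C)$--$(-)\sma\K\A$ adjunction (using that smashing with $\K\A$ is left adjoint to $F(\K\A,-)$, hence the adjoint map is determined by its composite with evaluation) the two maps $\K(\perm(\B,\C))\sma\K(\perm(\A,\B))\to F(\K\A,\K\C)$ agree.

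The main obstacle is bookkeeping rather than any genuine difficulty: one must be careful that the various structure maps in $\Spec$ — the unit isomorphism $S\sma\K\A\cong\K\A$, the associativity of $\sma$, and the counit $F(\K\A,\K\B)\sma\K\A\to\K\B$ — are threaded through correctly, and that the adjunction $(-\sma\K\A)\dashv F(\K\A,-)$ is being used in the form "a map into $F(\K\A,\K\B)$ is determined by its composite with $ev$ after smashing with $\K\A$." Given that, both conditions reduce to the already-established identities \eqref{adjointdiagram}, \eqref{trilineareval} (after applying $\K$), Lemma~\ref{lemma:Kon0morph}, and Corollary~\ref{cor:Kofmultimap}, so the proof is essentially a pair of diagram chases; I would present each as a single pasted diagram with a sentence identifying which earlier result makes each region commute.
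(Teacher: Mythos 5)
Your proposal is correct and follows essentially the same route as the paper: both verification steps are carried out on the adjoint diagrams, with the composition condition reduced to two uses of Diagram (\ref{adjointdiagram}), the image under $\K$ of Diagram (\ref{trilineareval}), smash-product functoriality and the closed structure of $\Spec$, and the unit condition reduced via Lemma \ref{lemma:Kon0morph} to the commuting bilinear diagram $(\finset,\A)\to\A$ in $\permmc$. The only difference is cosmetic bookkeeping (the paper phrases the unit step via $i_{1_\A}\colon\finset\to\perm(\A,\A)$ and displays the composition chase as six explicit subdiagrams), which you acknowledge.
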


\begin{proof} We must check that our definition of $\kobj$ respects composition and units.  We turn first to composition. We must show that the following diagram commutes:
 \[
  \xymatrix{
  \K(\perm(\B,\C))\sma \K(\perm(\A,\B)) \ar[r]^-{\K(\circ)} \ar[d]_{\kobj\sma \kobj} & \K(\perm(\A,\C))\ar[d]^{\kobj}\\
  F(\K\B,\K\C)\sma F(\K\A,\K\B) \ar[r]_-{\circ} & F(\K\A,\K\C).
  }
 \]
Note that the existence of the top map $\K(\circ)$  follows from the fact that composition in a $\PC$-category is a bilinear map.

We show this diagram commutes by proving commutativity of its adjoint diagram
\begin{equation}\label{theadjointdiagram}
 \xymatrix{
  \K(\perm(\B,\C))\sma \K(\perm(\A,\B))\sma \K\A \ar[r]^-{\K(\circ)\sma \id} \ar[d]_{\kobj\sma \kobj\sma id} & \K(\perm(\A,\C))\sma \K\A\ar[d]^{\K(ev)}\\
  F(\K\B,\K\C)\sma F(\K\A,\K\B)\sma \K\A \ar[r]_-{\widetilde{\circ}} & \K\C,
  }
\end{equation}
where $\widetilde{\circ}$ denotes the adjoint of $\circ$, that is, $\widetilde{\circ}$ is defined as the composite
\[
 F(\K\B,\K\C)\sma F(\K\A,\K\B) \sma \K\A \fto{\circ\sma \id} F(\K\A,\K\C)\sma \K\A \fto{ev} \K\C.
\]
Diagram (\ref{theadjointdiagram}) decomposes into the following pieces.

\[
\def\objectstyle{\scriptstyle}
\def\labelstyle{\scriptstyle}
\xy
(0,0)*+{\K(\perm(\B,\C))\sma \K(\perm(\A,\B))\sma \K\A}="A"; (100,0)*+{\K(\perm(\A,\C))\sma \K\A}="B"; (0,-40)*+{F(\K\B,\K\C)\sma F(\K\A,\K\B)\sma \K\A}="F"; (75,-20)*+{\K(\perm(\B,\C))\sma \K\B}="D"; (20,-20)*+{\K(\perm(\B,\C))\sma F(\K\A,\K\B)\sma \K\A} ="C"; (50,-40)*+{F(\K\B,\K\C)\sma \K\B}="E"; (100,-40)*+{\K\C}="G"; (50,-60)*+{F(\K\A,\K\C)\sma \K\A}="H";
{\ar^-{\K(\circ)\sma \id} "A";"B" };
{\ar@/_2pc/_-{\kobj\sma \kobj\sma \id} "A";"F"};
{\ar^-{\id \sma \K(ev)} "A"; "D"};
{\ar^*-<.4em>{\id\sma \kobj\sma \id} "A"; "C"};
{\ar^-{\K(ev)} "B"; "G"};
{\ar^(.3){\kobj\sma \id\sma \id} "C"; "F"};
{\ar^-{\id\sma ev} "C"; "D"};
{\ar_-{\kobj\sma \id} "D"; "E"};
{\ar^-{\K(ev)} "D"; "G"};
{\ar^-{ev} "E"; "G"};
{\ar^-{\id\sma ev} "F"; "E"};
{\ar^-{\circ\sma \id} "F"; "H"};
{\ar^-{ev} "H"; "G"};
(3,-11)*+[o][F-]{1};
(27,-11)*+[o][F-]{2};
(80,-11)*+[o][F-]{3};
(35,-30)*+[o][F-]{4};
(75,-30)*+[o][F-]{5};
(50,-50)*+[o][F-]{6};
\endxy
\]

Subdiagrams 1 and 4 commute by the functoriality of the smash product.  Subdiagrams 2 and 5 commute because they are instances of Diagram (\ref{adjointdiagram}), which commutes. Subdiagram 3 comes from applying $\K$ to the commuting Diagram (\ref{trilineareval})  displaying the compatible multilinear structure of composition and thus commutes.  Finally, the commutativity of Subdiagram 6 follows from standard properties of closed monoidal categories.

Next we show that $\kobj$ respects the units in the spectral categories $\K_\bullet(\perm)$ and $\Spec$.  We must show that the diagram
\[  \xymatrix{
  S \ar[r] \ar[rd] &\K\perm(\A,\A)\ar[d]^{\kobj}\\
  & F(\K\A,\K\A)
  }
\]
commutes, where the top map is the map given by applying Lemma \ref{lemma:Kon0morph} to the object $\id_\A$ in $\perm(\A,\A)$ and the diagonal map is the usual unit map for $\Spec.$  This diagram commutes because its adjoint diagram
\begin{equation}\label{unitadj}  \xymatrix{
  S\sma \K\A \ar[r] \ar[rd] &\K\perm(\A,\A)\sma \K\A \ar[d]^{\K(ev)}\\
  & \K\A
  }
\end{equation}
commutes.  To see this, consider the left diagonal map $S\sma\K\A\to \K\A$, which is the unit isomorphism in $\Spec$.  

Recall that $S=\K\finset$ and that this unit morphism is given by applying $\K$ to the bilinear map
\[(\finset,\A)\to \A\]
which is determined by sending $(*,a)\in \finset\times \A$ to $a\in \A$, where $*$ is the generating object of $\finset$. We obtain Diagram (\ref{unitadj}) by applying $\K$ to the following commutative diagram  in $\permmc$:
\[
\xymatrix@C=4em{(\finset,\A)\ar[r]^-{(i_{1_{\A}},1_{\A})}\ar[dr] &(\perm(\A,\A),\A)\ar[d]^{ev}\\ 
&\A,}
\]
Here $i_{1_{\A}}\colon \finset\to \perm(\A,\A)$ is the strictly unital lax  monoidal functor induced by sending $*\in\finset$ to $1_{\A}\in \perm(\A,\A)$, as is used in Lemma \ref{lemma:Kon0morph}.
\end{proof}

In order to better understand how the functor $\kobj$ works, we now analyze what it does after passing to $\pi_0$.

Since $\pi_0\colon \Spec\to \ab$ is a lax monoidal functor, by Proposition \ref{prop:basechange} it induces a change-of-enrichments functor 
\[(\pi_0)_\bullet\colon \Spec\textrm{-}Cat \to \ab\textrm{-}Cat \]
which is given by applying $\pi_0$ at the level of morphisms.  Thus we obtain an additive functor $(\pi_0)_\bullet\kobj\colon (\pi_0)_\bullet\K_\bullet\perm\to (\pi_0)_\bullet\Spec$ from our spectral functor $\kobj$.  On objects, the functor $(\pi_0)_\bullet\kobj$ is the same as $\kobj$---it sends a permutative category $\A$ to the spectrum $\K\A$.  On morphisms, the behavior of $(\pi_0)_\bullet$ is summarized in the following proposition.

Recall that for any permutative category $\mathcal{P}$, it is a fundamental property of $K$-theory that $\pi_0\K\mathcal{P}$ is the group completion of the abelian monoid of connected components of objects of $\mathcal{P}$.  Thus the morphism group $(\pi_0)_\bullet\K_\bullet\perm(\A,\B)$ is the group completion of the set connected components of strictly unital lax monoidal functors $f\colon \A\to \B$.
As such, a map out of this group is determined by where it sends such functors.

\begin{prop}\label{pi0worksspectralevel}
Let $\A$ and $\B$ be permutative categories.  The map on morphism  groups
\[ \pi_0\K \perm(\A,\B) \to \pi_0 F(\K\A,\K\B)\]
given by $(\pi_0)_\bullet\kobj$ sends the connected component of a strictly unital lax monoidal functor $f\colon \A\to \B$ to the homotopy class $[\K f]$, where $\K f$ is the image of $f$ under the $K$-theory functor $\K$.
\end{prop}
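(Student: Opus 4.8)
The plan is to unwind the definition of $(\pi_0)_\bullet\kobj$ on morphisms and trace a single connected component $[f]$ through the construction, reducing everything to the defining adjunction diagram~(\ref{adjointdiagram}). First I would recall that $(\pi_0)_\bullet\kobj$ is, by construction, obtained by applying $\pi_0$ to the map of spectra $\kobj\colon\K(\perm(\A,\B))\to F(\K\A,\K\B)$, and that $\kobj$ is the adjoint of $\K(ev)\colon\K(\perm(\A,\B))\sma\K\A\to\K\B$. So I want to identify the image of $[f]\in\pi_0\K(\perm(\A,\B))$ under this adjoint. The key observation is that a connected component of an object $f\in\perm(\A,\B)$ is, as noted just before the statement, exactly a $0$-morphism $f\colon()\to\perm(\A,\B)$ in $\permmc$ up to the appropriate equivalence, so that $[f]\in\pi_0\K(\perm(\A,\B))$ is represented by the map $S\to\K(\perm(\A,\B))$ of Lemma~\ref{lemma:Kon0morph}, namely $\K(i_f)$ where $i_f\colon\finset\to\perm(\A,\B)$ picks out $f$.

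Next I would compute $\kobj\circ\K(i_f)\colon S\to F(\K\A,\K\B)$ and show it is the map classifying $\K f$. By the adjunction, this amounts to checking that the composite
\[
S\sma\K\A \xrightarrow{\K(i_f)\sma\id} \K(\perm(\A,\B))\sma\K\A \xrightarrow{\K(ev)} \K\B
\]
agrees (after the unit isomorphism $S\sma\K\A\cong\K\A$) with $\K f\colon\K\A\to\K\B$. This follows by applying the multifunctor $\K$ to a commuting diagram in $\permmc$: the composite bilinear functor $(\finset,\A)\xrightarrow{(i_f,1_\A)}(\perm(\A,\B),\A)\xrightarrow{ev}\B$ sends $(*,a)$ to $ev(f,a)=f(a)$, and hence, composing with the $0$-morphism $*\colon()\to\finset$ in the first slot, equals the linear functor $\A\to\B$ underlying $f$. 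This is exactly the kind of $\permmc$-diagram already exploited in the proof of the unit compatibility of $\kobj$ (the diagram with $i_{1_\A}$), so the argument is a direct adaptation: apply $\K$, use Corollary~\ref{cor:Kofmultimap} and Lemma~\ref{lemma:Kon0morph}, and conclude that $\K(ev)\circ(\K(i_f)\sma\id)$ is $\K f$ precomposed with the unit isomorphism.

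Finally I would assemble these observations: since the spectrum-level adjoint of $\kobj\circ\K(i_f)$ is $\K f$, the map $\kobj\circ\K(i_f)\colon S\to F(\K\A,\K\B)$ is precisely the point of $F(\K\A,\K\B)$ representing the homotopy class $[\K f]$; passing to $\pi_0$ then shows $(\pi_0)_\bullet\kobj$ sends $[f]$ to $[\K f]\in\pi_0 F(\K\A,\K\B)$. Because $(\pi_0)_\bullet\kobj$ is a homomorphism of abelian groups and $\pi_0\K\perm(\A,\B)$ is the group completion of the monoid of components of objects of $\perm(\A,\B)$, it is determined by its values on the classes $[f]$, so this computation suffices.

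I expect the main obstacle to be purely bookkeeping rather than conceptual: one must be careful that the identification of $[f]$ with the $0$-morphism $\K(i_f)$ is compatible with how $\pi_0\K$ of a permutative category is identified with the group completion of its monoid of components (this is where the Barratt--Priddy--Quillen identification $\K\finset\simeq S$ and Lemma~\ref{lemma:Kon0morph} do the real work), and that the unit isomorphisms $S\sma\K\A\cong\K\A$ are inserted consistently when passing between $\kobj$ and its adjoint $\K(ev)$. None of this is deep, but it is the step where an error would most easily creep in.
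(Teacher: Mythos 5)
Your proposal is correct and follows essentially the same route as the paper's own proof: identify the component $[f]$ with the $0$-morphism $\K(i_f)$ via Lemma \ref{lemma:Kon0morph}, pass to the adjoint diagram involving $\K(ev)$ and the unit isomorphism $S\sma\K\A\cong\K\A$, and reduce to a commuting diagram of bilinear functors in $\permmc$ built from $ev(f,a)=f(a)$. The paper writes out that last diagram by factoring the unit isomorphism through $(\perm(\A,\A),\A)$ via $(i_{\id},\id)$ followed by $ev$ and then $f$, which is exactly the bookkeeping step you flag, handled as you anticipate.
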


\begin{proof}
We must show that the following diagram commutes in the homotopy category of spectra: 
\[\xymatrix@C=3pc{
S\ar[r]^-{\K(i_f)}\ar[dr]_-{\widetilde{\K (f)}} & \K\perm(\A,\B)\ar[d]^-{\kobj}\\
&F(\K\A,\K\B).} 
\]
Here $\K(i_f)$ is the 0-morphism in $\Spec$ associated to the object $f\in \perm(\A,\B)$ in Lemma \ref{lemma:Kon0morph} and $\widetilde{\K f}$ is the morphism $S\to F(\K\A,\K\B)$ that is adjoint to the map $\K f\colon \K\A\to \K\B$.  Thus the desired commutative diagram has adjoint
\[
\xymatrix@R=1ex@!C@C=0em{
  S\sma \K\A \ar[rr]^-{\K(i_f)\sma \id} \ar[rd]_{\cong} &&\K\perm(\A,\B)\sma \K\A \ar[dd]^{\K(ev)}\\
  &\K\A \ar[rd]_{\K f}\\
  && \K\B.
  }
\]
The isomorphism $S\sma \K\A \to \K\A$ is the adjoint of the unit map and is displayed in Diagram (\ref{unitadj}).  Thus, recalling that $S=\K\finset$, this diagram is obtained by applying $\K$ to the following diagram:
\[\xymatrix@R=1ex{
(\finset,\A)\ar[rrr]^-{(i_f,\id)}\ar[dr]_-{(i_\id,\id)} &&& (\perm(\A,\B),\A)\ar[dd]^-{ev}\\
&(\perm(\A,\A),\A)\ar[dr]_-{ev}\\
&&\A\ar[r]^f & \B
}
\]
The left diagonal composite sends $(\ast,a)\in \finset\times\A$ to $a\in\A$.  It follows from the definition of evaluation that this is a commutative diagram of bilinear maps in $\permmc$.
\end{proof}

As a consequence of this proposition, we obtain  the following.
\begin{cor}\label{pi0works}
The functor $\kobj$ induces a commutative diagram of abelian groups
\[\xymatrix{ \pi_0\K\perm(\A,\B) \ar[r]\ar[dr]& \pi_0F(\K\A,\K\B) \ar[d]\\
& \ab(\pi_0\K\A,\pi_0\K\B)
}
\]
where the lower left map is determined by sending the class of strictly unital lax monoidal functor $f$ to the induced map on group completions arising from the restriction of $f$ to 
object sets.
\end{cor}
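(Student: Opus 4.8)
The plan is to deduce Corollary \ref{pi0works} directly from Proposition \ref{pi0worksspectralevel} by composing with the lax monoidal structure map of $\pi_0$, using only formal properties of the change-of-enrichments functors and the description of $\pi_0$ of a $K$-theory spectrum as a group completion. First I would recall that the diagonal map in the statement is the composite of $\kobj$ on morphisms with the canonical map $\pi_0 F(\K\A,\K\B)\to \ab(\pi_0\K\A,\pi_0\K\B)$ coming from the lax monoidal structure of $\pi_0$ (applied to evaluation $F(\K\A,\K\B)\sma \K\A\to\K\B$); this is exactly the structure that makes $(\pi_0)_\bullet$ a 2-functor $\Spec\text{-}Cat\to\ab\text{-}Cat$ via Proposition \ref{prop:basechange}. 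So the right-hand vertical and the diagonal are, by construction, compatible with the triangle of Proposition \ref{pi0worksspectralevel}, and the commutativity of the diagram in the corollary is equivalent to checking where the lower-left composite sends the class of a strictly unital lax monoidal functor $f\colon\A\to\B$.

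Next I would unwind both composites on the generator $[f]\in\pi_0\K\perm(\A,\B)$. Since $\pi_0\K\perm(\A,\B)$ is the group completion of $\pi_0$ of the category $\perm(\A,\B)$, a homomorphism out of it is determined by its values on classes $[f]$, so it suffices to treat such a generator. By Proposition \ref{pi0worksspectralevel}, $(\pi_0)_\bullet\kobj$ sends $[f]$ to $[\K f]\in\pi_0 F(\K\A,\K\B)$, and then the vertical map sends $[\K f]$ to the homomorphism $\pi_0\K f\colon \pi_0\K\A\to\pi_0\K\B$ induced on $\pi_0$. On the other hand, the asserted description of the lower-left map sends $[f]$ to the map on group completions induced by the restriction of $f$ to object sets. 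So the content is the identification
\[
\pi_0(\K f)\ =\ (\text{group completion of }f\ \text{on }\pi_0)\ \colon\ \pi_0\K\A\to\pi_0\K\B.
\]
This is precisely the fundamental naturality property of $K$-theory on $\pi_0$: for any permutative category $\mathcal P$, $\pi_0\K\mathcal P$ is the group completion of the monoid $\pi_0\mathcal P$ of isomorphism classes of objects, naturally in lax monoidal functors, so that $\pi_0\K f$ is the group completion of the induced monoid homomorphism $\pi_0 f$. I would cite this as the same ``fundamental property of $K$-theory'' already invoked in the paragraph preceding Proposition \ref{pi0worksspectralevel}.

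Finally I would assemble these observations: the outer triangle of the corollary commutes because it is obtained by post-composing the (already proven) commuting triangle of Proposition \ref{pi0worksspectralevel} with the natural transformation $\pi_0 F(\K\A,\K\B)\to\ab(\pi_0\K\A,\pi_0\K\B)$, and on the generating classes $[f]$ both routes agree by the naturality of $\pi_0\circ\K$ just discussed. Since both composites are group homomorphisms out of a group completion and agree on the image of the defining monoid, they agree everywhere. The main (and essentially only) obstacle is bookkeeping: making sure the ``vertical'' map in the corollary is exactly the one induced by the lax monoidal structure of $\pi_0$ applied to evaluation, so that it is literally the post-composition referred to above, rather than some a priori different comparison map; once that identification is made explicit, the proof is formal.
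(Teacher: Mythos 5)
Your proposal is correct and follows essentially the same route as the paper, which presents the corollary as an immediate consequence of Proposition \ref{pi0worksspectralevel} combined with the fact that $\pi_0\K$ is naturally the group completion of the monoid of object components. Your explicit bookkeeping of the vertical map and the reduction to generators $[f]$ just spells out what the paper leaves implicit.
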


\section{Constructing genuine equivariant spectra}\label{infiniteloops}

The construction of the functor $\kobj$ of Section \ref{constructk}, when combined with the recent work of Guillou--May, yields a new method for constructing genuine $G$-spectra for finite groups of equivariance.  We will apply this method in the specific case of Eilenberg--MacLane spectra in Section \ref{EMspec} and suspension spectra of finite $G$-sets in Section \ref{sect:suspspec}.  The basic idea behind \cite{GM2011} is that genuine $G$-spectra are given by spectrally-enriched presheaves of spectra on a particularly accessible spectrally-enriched version of the Burnside category.  Our functor $\kobj$ allows us to construct such presheaves from the algebraic data of a $\PC$-functor from a $\PC$-category version of the Burnside category to $\perm$.

The $\PC$-category in question is defined using an explicit small version of the category of $G$-sets. We regard a $G$-set as a finite set of the form $(\mathbf{n},\alpha)$, where $\mathbf{n}=\{1,\dotsc,n\}$ and $\alpha$ is a homomorphism $G\fto{\alpha}\Sigma_n$ endowing $\mathbf{n}$ with a $G$-action.  A $G$-map $(\mathbf{n},\alpha)\to (\mathbf{m},\beta)$ is a function $f\col \mathbf{n}\to \mathbf{m}$ such that $f\circ\alpha(g)=\beta(g)\circ f$ for all $g\in G$.  The disjoint union of $A=(\mathbf{n},\alpha)$ and $B=(\mathbf{m}, \beta)$ is given by $A\amalg B=(\mathbf{n+m},\alpha+\beta)$ where $\alpha+\beta$ indicates the usual block sum homomorphism $G\to \Sigma_{n+m}$.  This model of disjoint union is strictly associative and unital.

\begin{defn}The \emph{Burnside category} $\B_G$ is a category whose objects are finite $G$-sets and whose morphisms arise from spans of finite $G$-sets: Given $G$-sets $A$ and $B$, a \emph{span} from $A$ to $B$ is a diagram of the form  
\[ A \leftarrow C\to B\]
where $C$ is also a finite $G$-set and both maps are equivariant. Note that such a diagram is equivalent to a map $C\to B\times A$ as well. An \emph{isomorphism} of spans is a diagram of the form:
\[
\xymatrix @R=1.25pc{ & C\ar[dl]\ar[dr]\ar[dd]^{\cong}\\
A && B\\
&D\ar[ul]\ar[ur]}
\]
  Spans from $A$ to $B$ have a monoidal product given by disjoint union in the source of the two maps of the span, and the morphisms in $\B_G$ are defined  to be the group completion of the abelian monoid of isomorphism classes of spans. Composition is given by pullback of spans. These morphisms make  $\B_G$ into an additive category. 
\end{defn}

 It is evident from the above definition that $\B_G$ can be thought of as a group-completed quotient of a 2-category in which the one-morphisms are actual spans and the two-morphisms are the isomorphisms of spans. In fact, using the skeletal version of $G$-sets above, it is not difficult to define a $\PC$-category version.

\begin{defn} The $\PC$-category $\BGcat$ has  finite $G$-sets as objects. If $A$ and $B$ are finite $G$-sets, the permutative category of morphisms $\BGcat(A,B)$ is the category of spans of finite $G$-sets from $A$ to $B$  and isomorphisms of spans.  The permutative structure is given by disjoint union. Composition is given by pullback, where we make the following explicit choices.  Let 
\[A\xto{f} B \xleftarrow{f'} C\]
be a diagram of $G$-maps.  If $f$ is the identity, choose the pullback $B\times_B C=C$ and if $f'$ is the identity, choose the pullback $A\times_B B=A$. These specific choices of pullbacks along identity maps make composition strictly unital.  If neither $f$ nor $f'$ is the identity, choose the pullback $A\times_B C$ to be the subset of $A\times C$ picked out by the pullback condition, where the set $A\times C$ is given lexicographical ordering.  Together these choices ensure that composition in $\BGcat$ is strictly associative.
\end{defn}

Thus $\B_G$ is produced from $\BGcat$ by first taking the quotient by the 2-morphisms and then taking the group completion of the resulting abelian monoids of morphisms between objects.

Since $\BGcat$ is a $\PC$-category, Theorem \ref{KthryofPCcat} allows us to produce a spectrally enriched version  $\K_\bullet\BGcat$ by applying $\K$-theory to the morphism permutative categories.
\begin{defn} Let $\BGspec$ denote the spectrally enriched Burnside category $\K_\bullet\BGcat$.  
\end{defn}

This category is the spectrally-enriched category used to model genuine $G$-spectra in the work of Guillou--May.

\begin{thrm}[\cite{GM2011}]\label{mainthrmofGuillouMay}
Let $G$ be a finite group.  The category of genuine $G$-spectra is Quillen equivalent to the category of spectrally-enriched functors $\BGspec^\op\to \Spec$.
\end{thrm}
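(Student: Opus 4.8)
The plan is to realize this equivalence as a chain of Quillen equivalences, following the Schwede--Shipley recognition principle for stable model categories together with an identification of the relevant spectral endomorphism category with $\BGspec$. Concretely, one fixes a good point-set model for genuine $G$-spectra --- say orthogonal $G$-spectra, or the $G$-equivariant $\mathcal{S}_G$-modules used in \cite{GM2011} --- which is a cofibrantly generated, proper, stable model category enriched over $\Spec$. The first step is to recall the standard structural fact that the suspension spectra $\Sigma^\infty_+ G/H$ of the orbits, as $H$ ranges over conjugacy classes of subgroups of $G$, form a set of compact generators for this stable model category.

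The second step invokes the Morita theory for stable model categories: a spectral, cofibrantly generated, stable model category with a set $\{P_i\}$ of compact generators is Quillen equivalent to the category of spectral presheaves $\E^{\op}\to\Spec$, where $\E$ is the full spectral subcategory on the $P_i$, so that $\E(P_i,P_j)$ is the derived mapping spectrum $F_G(P_i,P_j)$. Applied to the orbit generators, this identifies genuine $G$-spectra with modules over the spectral ``orbit Burnside category'' having objects $G/H$ and morphism spectra $F_G(\Sigma^\infty_+ G/H, \Sigma^\infty_+ G/K)$. A small amount of Morita invariance then lets one enlarge the object set from orbits to all finite $G$-sets without changing the presheaf category, since every finite $G$-set is a finite coproduct of orbits and finite coproducts of compact generators are again compact generators; call the resulting larger spectral category $\E'$.

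The third and central step is to construct an equivalence of spectral categories $\BGspec\simeq\E'$. Here one uses the equivariant Barratt--Priddy--Quillen/Segal theorem: finite $G$-sets are Spanier--Whitehead self-dual, so $F_G(\Sigma^\infty_+ A,\Sigma^\infty_+ B)\simeq\Sigma^\infty_+(A\times B)$, and $\Sigma^\infty_+(A\times B)$ is the $\K$-theory spectrum of the permutative category of finite $G$-sets over $A\times B$, which is precisely the permutative category $\BGcat(A,B)$ of spans from $A$ to $B$ and their isomorphisms. The subtle point --- and the main obstacle --- is to make this identification at the point-set level and compatibly with composition: one must exhibit a zig-zag of spectral functors inducing $\pi_*$-isomorphisms on all morphism spectra between $\K_\bullet\BGcat$ and $\E'$, under which pullback of spans corresponds, via the realized self-duality, to composition of equivariant function spectra. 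This forces one to choose point-set duality data coherently over all finite $G$-sets and to check that the $\K$-theory multifunctor of Theorem \ref{Kpairs}, applied to the bilinear pullback composition on $\BGcat$, matches the smash-composition on function spectra; the bookkeeping here is the technical heart of \cite{GM2011}.

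Finally, composing the Quillen equivalence of the second step with the equivalence $\BGspec\simeq\E'$ of spectral categories from the third step --- which induces a Quillen equivalence on presheaf categories, since a Dwyer--Kan equivalence of $\Spec$-enriched categories induces a Quillen equivalence of module categories --- yields the asserted Quillen equivalence between genuine $G$-spectra and spectral presheaves $\BGspec^{\op}\to\Spec$. I expect Step three, and within it the compatibility of pullback composition with composition of function spectra under point-set duality, to be where essentially all of the real work lies; the remaining steps are applications of general machinery once the correct point-set foundations are in place.
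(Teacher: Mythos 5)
This statement is not proved in the paper at all: it is imported verbatim from Guillou--May \cite{GM2011}, so there is no internal proof to compare against. Your outline is, in substance, the strategy of that cited proof: compact generation of genuine $G$-spectra by the orbit suspension spectra, a Schwede--Shipley/Morita-type equivalence with spectral presheaves on the full subcategory of generators, and then the real work of identifying that endomorphism spectral category with $\BGspec=\K_\bullet\BGcat$ via equivariant self-duality and the equivariant Barratt--Priddy--Quillen theorem, with the point-set compatibility of span-pullback composition and smash composition of function spectra (handled in \cite{GM2011} through the pairings of May's $K$-theory machine, as this paper notes) being the technical heart. So your proposal is a faithful sketch of the external proof rather than an alternative route, and no gap beyond the acknowledged technical bookkeeping is introduced.
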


We can now state Theorem \ref{thrm:inftloopmachine} precisely.
\begin{thrm}\label{precisestatementofmainthrm}
Let $G$ be a finite group.  There is a functor
\[ \eqmach\colon \Fun_{\PC}(\BGcat^\op,\perm)\to \Fun_{\Spec}(\BGspec^\op,\Spec)\]
from the category of $\PC$-functors $\BGcat^\op\to \perm$ and $\PC$-natural transformations to the category of spectral functors $\BGspec^\op\to \Spec$ and spectral natural transformations with the following property. For every $\PC$-functor $X\colon \BGcat^\op\to \perm$ and every finite $G$-set $A$, the spectral functor $\eqmach(X)$ takes $A\in \BGspec$ to the spectrum $\K(X(A))$. 
\end{thrm}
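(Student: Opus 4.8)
The plan is to build the functor $\eqmach$ by composing two constructions we already have available. First, recall from Theorem~\ref{KthryofPCcat} that applying $\K$-theory to hom-categories defines a 2-functor $\K_\bullet\colon \PC\text{-}\cat \to \Spec\text{-}\cat$, and from Theorem~\ref{thrm:constructk} (proved in Section~\ref{constructk}) that there is a spectrally-enriched functor $\kobj\colon \K_\bullet(\perm)\to \Spec$. Given a $\PC$-functor $X\colon \BGcat^\op\to\perm$, I would first apply $\K_\bullet$ to obtain a spectral functor $\K_\bullet(X)\colon \K_\bullet(\BGcat^\op)\to \K_\bullet(\perm)$, then postcompose with $\kobj$. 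Since $\K_\bullet(\BGcat^\op)=(\K_\bullet\BGcat)^\op=\BGspec^\op$ (one must check that $\K_\bullet$ commutes with taking opposites, which follows because $\K$ as a multifunctor respects the twist action used to define opposite enriched categories), this composite $\kobj\circ\K_\bullet(X)$ is a spectral functor $\BGspec^\op\to\Spec$. Setting $\eqmach(X) := \kobj\circ\K_\bullet(X)$ then automatically has the stated property on objects: $A$ is sent to $\kobj(X(A))=\K(X(A))$.

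Next I would check functoriality of $\eqmach$ on morphisms. A $\PC$-natural transformation $\alpha\colon X\to Y$ is sent by the 2-functor $\K_\bullet$ to a spectral natural transformation $\K_\bullet(\alpha)\colon \K_\bullet(X)\to\K_\bullet(Y)$; whiskering with the spectral functor $\kobj$ produces a spectral natural transformation $\kobj * \K_\bullet(\alpha)\colon \eqmach(X)\to\eqmach(Y)$. This assignment preserves identities and composition of $\PC$-natural transformations because both $\K_\bullet$ (a 2-functor) and whiskering by a fixed functor do so. Hence $\eqmach$ is a well-defined functor from $\Fun_{\PC}(\BGcat^\op,\perm)$ to $\Fun_{\Spec}(\BGspec^\op,\Spec)$.

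The one genuinely delicate point—and what I expect to be the main obstacle—is the compatibility of $\K_\bullet$ with opposite categories, i.e.\ verifying $\K_\bullet(\pcC^\op)=(\K_\bullet\pcC)^\op$ as spectral categories, not merely up to equivalence. The opposite of an $\M$-enriched category is built using the twist (symmetry) action on $2$-morphisms in $\M$, so this identity rests on $\K$ being a multifunctor of \emph{symmetric} multicategories $\permmc\to\Spec$ that respects these symmetry actions; this is part of the content of Theorem~\ref{Kpairs} as recorded in \cite{maypairings, EM2006}. Concretely, one checks that the composition $2$-morphism of $(\K_\bullet\pcC)^\op$, which is $\K$ applied to the twisted composition of $\pcC$, agrees with $\K$ applied to the composition of $\pcC^\op$, using that $\K(\tau_{a,b}\cdot f)=\tau_{\K a,\K b}\cdot\K(f)$ for the relevant twist morphisms $\tau$. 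Everything else is formal: the remaining verifications—associativity and unit coherence for the composite spectral functor, and naturality squares for whiskered transformations—follow directly from the fact that $\kobj$ is a spectral functor (Theorem~\ref{thrm:constructk}) and $\K_\bullet$ is a 2-functor (Theorem~\ref{KthryofPCcat}), with no further computation required.
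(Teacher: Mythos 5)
Your proposal is correct and follows essentially the same route as the paper: define $\eqmach(X)=\kobj\circ\K_\bullet X$ using Theorem \ref{KthryofPCcat} and Theorem \ref{thrm:constructk}, with functoriality on $\PC$-natural transformations coming from 2-functoriality of $\K_\bullet$ and whiskering with $\kobj$. Your explicit check that $\K_\bullet$ commutes with opposites (via symmetry of the multifunctor $\K$) is a point the paper leaves implicit, but it does not change the argument.
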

Since $\Fun_{\Spec}(\BGspec^\op,\Spec)$ is Quillen equivalent to the category of genuine $G$-spectra,  Theorem \ref{precisestatementofmainthrm} should be regarded as a functorial construction of $G$-spectra from categorical data.

Our construction provides control over the Mackey functor homotopy groups of an output $G$-spectrum in terms of the categories used to build it.  Recall that $\pi_0\colon \Spec\to \ab$ is a lax monoidal functor and thus induces a change of enrichment functor 
\[ (\pi_0)_\bullet\colon \Spec\textrm{-}Cat \to \ab\textrm{-}Cat \]
which replaces function spectra with their 0th homotopy groups.

 From the proof of the main theorem of Guillou and May, one can readily show the following.
\begin{prop}\label{GM:pi0} Let $Y$ be a spectral functor $\BGspec^\op\to \Spec$, thought of as a genuine $G$-spectrum.  Then the Mackey functor $\underline{\pi}_n(Y)$ is the composite functor
\[\B_G^\op\cong (\pi_0)_\bullet\BGspec^\op\xto{(\pi_0)_\bullet Y} \HoSpec\xto{\pi_n} \ab\]
\end{prop}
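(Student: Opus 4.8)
The plan is to deduce the proposition from two facts that are essentially contained in \cite{GM2011}: an identification of the classical Burnside category $\B_G$ with $(\pi_0)_\bullet$ of its spectral enrichment, and the compatibility of the Guillou--May equivalence of Theorem \ref{mainthrmofGuillouMay} with passage to homotopy Mackey functors.

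First I would establish the isomorphism $\B_G^\op\cong(\pi_0)_\bullet\BGspec^\op$. By the fundamental property of $K$-theory recalled just before Proposition \ref{pi0worksspectralevel}, for any permutative category $\mathcal{P}$ the group $\pi_0\K\mathcal{P}$ is the group completion of the monoid $\pi_0\mathcal{P}$ of connected components of objects. Applied to $\mathcal{P}=\BGcat(A,B)$, whose objects are spans from $A$ to $B$ and whose monoidal structure is disjoint union, this yields precisely the morphism group $\B_G(A,B)$. One then checks that this identification respects composition and units: composition in $(\pi_0)_\bullet\BGspec$ is $\pi_0$ of $\K$ applied to the pullback-of-spans bilinear functor, and the multifunctoriality of $\K$ (Theorem \ref{Kpairs}) shows that this descends on $\pi_0$ to the group completion of pullback of spans, which is exactly composition in $\B_G$; units match because of the strictly unital choices of pullbacks built into $\BGcat$. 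Taking opposites is legitimate since $\permmc$ and $\Spec$ are symmetric, giving the displayed isomorphism. Here one also uses the standard identification of $(\pi_0)_\bullet\Spec$ with $\HoSpec$ via derived function spectra, so that $(\pi_0)_\bullet Y$ indeed takes values in $\HoSpec$.

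Next I would recall the part of Guillou--May's proof that is relevant: under their equivalence a spectral functor $Y\colon\BGspec^\op\to\Spec$ corresponds to a genuine $G$-spectrum whose $H$-fixed point spectrum is equivalent to $Y(G/H)$, so that $\underline{\pi}_n(Y)(G/H)=\pi_n Y(G/H)$ and, by additivity over disjoint unions of orbits, $\underline{\pi}_n(Y)(A)=\pi_n Y(A)$ for every finite $G$-set $A$. This agrees with the composite functor of the statement on objects. It remains to match morphisms. The restriction, transfer and conjugation maps that generate the Mackey functor $\underline{\pi}_n(Y)$ in the sense of \cite{alaska} are, by the construction in \cite{GM2011}, induced by applying $Y$ to the corresponding restriction, transfer and conjugation spans; under the isomorphism of the previous paragraph these spans are exactly the generating morphisms of $\B_G$, and the composite $\pi_n\circ(\pi_0)_\bullet Y$ sends them to the same induced maps on homotopy groups. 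Since an additive functor $\B_G^\op\to\ab$ is determined by its values on objects together with its effect on restrictions, transfers and conjugations, the two functors coincide; naturality in $Y$ is immediate.

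The main obstacle is this last point: one must know that the Mackey functor structure on $\underline{\pi}_\ast$ of a genuine $G$-spectrum --- the structure arising from genuine fixed points together with Spanier--Whitehead-dual transfer maps --- is literally the structure that the Guillou--May equivalence reads off from a spectral functor via $(\pi_0)_\bullet$, and not merely an abstractly isomorphic one. This is implicit in \cite{GM2011}, and is close to the reason their model is useful, but making it precise requires tracing their comparison zig-zag and checking that each stage is compatible with taking $\pi_n$ of fixed points. Granting this, the proposition follows from the identification of Burnside categories in Step 1 together with the definitions.
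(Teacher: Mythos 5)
Your proposal is correct and follows essentially the same route as the paper, which offers no independent argument: it simply cites the proof of the Guillou--May theorem together with the two identifications you spell out, namely $(\pi_0)_\bullet\BGspec\cong\B_G$ (via the fact that $\pi_0\K$ of a span category is the group completion of its monoid of components) and $(\pi_0)_\bullet\Spec\simeq\HoSpec$. The point you flag as the remaining obstacle --- that the fixed-point and transfer structure the equivalence reads off from a spectral functor is literally the Mackey structure on $\underline{\pi}_n$ of the corresponding genuine $G$-spectrum --- is likewise deferred to \cite{GM2011} in the paper, so your treatment is, if anything, more explicit than the original.
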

Here we are using the facts that applying $(\pi_0)_\bullet$ to $\BGspec$ produces the usual Burnside category and that $(\pi_0)_\bullet\Spec$ is the homotopy category of spectra.

Together with Proposition \ref{pi0works}, this proposition allows us to identify the homotopy Mackey functors of $G$-spectra produced by our construction.
\begin{prop}\label{prop:thehomotopygroupmackeyfunctors} Let $X\col \BGcat^\op\to \perm$ be a $\PC$-functor.  Let $\eqmach X$ be the spectral functor $\BGspec^\op\to \Spec$ produced from $X$ by Theorem \ref{precisestatementofmainthrm}.  Then the Mackey functor homotopy group $\underline{\pi}_n(\eqmach X)$  is given by the composite  
\[\B_G^\op \cong (\pi_0)_\bullet (\K_\bullet(\BGcat^\op))\xto{(\pi_0)_\bullet\K_\bullet X} (\pi_0)_\bullet(\K_\bullet \perm) \xto{(\pi_0)_\bullet\kobj} \HoSpec\xto{\pi_n} \ab\]
In particular, $\underline{\pi}_0(\eqmach X)$ is given by
\begin{equation*}
\underline{\pi}_0(\eqmach X)(G/H)= Gr([obX(G/H)])
\end{equation*}
where for a permutative category $\A$, the notation $Gr([ob\A])$ denotes the group completion of the connected components of objects of $\A$.
\end{prop}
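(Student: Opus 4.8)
The plan is to reduce the statement to Proposition \ref{GM:pi0} together with the functoriality of the change-of-enrichment functor $(\pi_0)_\bullet$; all the substance lies in results already in hand. First I would record the shape of $\eqmach X$. By its construction (underlying Theorem \ref{precisestatementofmainthrm}), the spectral functor $\eqmach X$ is the composite
\[
\BGspec^\op = \K_\bullet(\BGcat^\op) \xto{\K_\bullet X} \K_\bullet\perm \xto{\kobj} \Spec,
\]
where $\K_\bullet X$ is the image of the $\PC$-functor $X$ under the 2-functor of Theorem \ref{KthryofPCcat} and $\kobj$ is the spectrally enriched functor of Theorem \ref{thrm:constructk}; on objects this sends a finite $G$-set $A$ to $\K(X(A))$, which is the defining property of $\eqmach$.

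Next I would apply Proposition \ref{GM:pi0} with $Y = \eqmach X$, identifying $\underline{\pi}_n(\eqmach X)$ with the composite $\B_G^\op \cong (\pi_0)_\bullet\BGspec^\op \xto{(\pi_0)_\bullet(\eqmach X)} \HoSpec \xto{\pi_n} \ab$. Since $\pi_0\col\Spec\to\ab$ is lax monoidal, Proposition \ref{prop:basechange} makes $(\pi_0)_\bullet$ a 2-functor, so it carries the composite spectral functor $\kobj \circ \K_\bullet X$ to $(\pi_0)_\bullet\kobj \circ (\pi_0)_\bullet\K_\bullet X$. Substituting this factorization and using $(\pi_0)_\bullet\BGspec^\op = (\pi_0)_\bullet\K_\bullet(\BGcat^\op) \cong \B_G^\op$ (the identification noted just after Proposition \ref{GM:pi0}) yields exactly the claimed description
\[
\B_G^\op \cong (\pi_0)_\bullet(\K_\bullet(\BGcat^\op)) \xto{(\pi_0)_\bullet\K_\bullet X} (\pi_0)_\bullet(\K_\bullet\perm) \xto{(\pi_0)_\bullet\kobj} \HoSpec \xto{\pi_n} \ab.
\]

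For the ``in particular'' clause I would evaluate this composite on an object $G/H$: it returns $\pi_n$ of the spectrum $(\eqmach X)(G/H) = \K(X(G/H))$, so $\underline{\pi}_0(\eqmach X)(G/H) = \pi_0\K(X(G/H))$. By the fundamental property of $K$-theory recalled before Proposition \ref{pi0worksspectralevel}, for a permutative category $\mathcal{P}$ the group $\pi_0\K\mathcal{P}$ is the group completion of the abelian monoid $[ob\mathcal{P}]$ of connected components of objects; taking $\mathcal{P} = X(G/H)$ gives $\underline{\pi}_0(\eqmach X)(G/H) = Gr([obX(G/H)])$.

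Since the ingredients (Proposition \ref{GM:pi0}, Theorem \ref{thrm:constructk}, and 2-functoriality of $(\pi_0)_\bullet$) are already available, I do not anticipate a genuine obstacle; the only point demanding care is compatibility of identifications. One must check that the equivalence $(\pi_0)_\bullet\K_\bullet(\BGcat^\op) \cong \B_G^\op$ used here agrees with the one implicit in Proposition \ref{GM:pi0} and is compatible with the several opposite-category constructions. This follows from the facts that $(\pi_0)_\bullet$ commutes with passage to opposites (each uses only the symmetric structure of the enriching multicategory) and that $(\pi_0)_\bullet\K_\bullet$ applied to $\BGcat$ recovers the presentation of $\B_G$ as the group-completed 2-categorical quotient of $\BGcat$ from Section \ref{infiniteloops}.
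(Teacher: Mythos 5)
Your proposal is correct and follows essentially the same route as the paper: apply Proposition \ref{GM:pi0} to $Y = \kobj\circ\K_\bullet X$ and use the fact that $\pi_0\circ\K$ is group completion for the $\underline{\pi}_0$ statement. Your extra remarks spelling out the 2-functoriality of $(\pi_0)_\bullet$ on the composite and the compatibility of the identification $(\pi_0)_\bullet\K_\bullet(\BGcat^\op)\cong\B_G^\op$ simply make explicit what the paper's brief proof leaves implicit.
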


\begin{proof}[Proof of Theorem \ref{precisestatementofmainthrm}]
Let $X\col \BGcat^\op\to \perm$ be a $\PC$-functor.  The spectral functor $\eqmach(X)$ is defined as follows. We apply the functor $\K_\bullet$ of  Theorem \ref{KthryofPCcat}, to produce a spectrally-enriched functor
\[\K_\bullet X\colon \BGspec^\op\to \K_\bullet(\perm).\]
 Composition with the spectrally enriched functor $\kobj\col \K\perm\to \Spec$ then yields a spectrally-enriched functor $\BGspec^\op\to \Spec$.  By Theorem \ref{mainthrmofGuillouMay}, such a functor may be viewed as a genuine $G$-spectrum.  
 
This process is by definition functorial in natural transformations of $\PC$-functors.
\end{proof}

\begin{proof}[Proof of Proposition \ref{prop:thehomotopygroupmackeyfunctors}]
The statement about $\underline{\pi}_n(Y)$ follows directly from applying Proposition \ref{GM:pi0} to the functor $Y= \kobj\circ \K_\bullet X$ constructed in Theorem \ref{precisestatementofmainthrm}. The 0th homotopy groups behave as stated because the composite $\pi_0\circ \K$ is given by group completion.
\end{proof}

\section{Eilenberg--MacLane spectra}\label{EMspec}
Our motivating example is the construction of equivariant Eilenberg--MacLane spectra for Mackey functors.  These can be constructed in a particularly straightforward fashion, as we discuss below.  This construction of Eilenberg--MacLane spectra for Mackey functors  enjoys several nice properties.  Unlike a naive approach via killing homotopy groups, our construction is functorial in natural transformations of Mackey functors.   It also treats the transfer and restriction maps in precisely the same way.  Constructions via an equivariant  Dold--Thom theorem, such as those of dos Santos and Nie \cite{dosSantosNie}, build transfer maps in a more complicated way from those of restriction.  This breaks the self-duality symmetry that Mackey functors naturally have.  Our infinite loop space machine inherently preserves this self-duality.

Recall that a Mackey functor is an additive contravariant functor from the Burnside category $\B_G$ to abelian groups.
 Any such functor in fact determines a $\PC$-functor from $\BGcat^\op\to \perm$, and thus produces a genuine $G$-spectrum via the construction of Section \ref{infiniteloops}.

\begin{thrm}\label{EMspecthrm} The construction of Theorem \ref{precisestatementofmainthrm} produces Eilenberg--MacLane spectra for Mackey functors.
\end{thrm}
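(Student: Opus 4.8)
The plan is to show that when the input $\PC$-functor $X\colon \BGcat^\op\to\perm$ arises from a Mackey functor $M\colon \B_G^\op\to \ab$, the output spectral functor $\eqmach(X)$ represents the Eilenberg--MacLane spectrum $HM$. By Theorem~\ref{mainthrmofGuillouMay} it suffices to produce a spectral functor $\BGspec^\op\to\Spec$ whose homotopy Mackey functors are concentrated in degree $0$, where they agree with $M$; Proposition~\ref{prop:thehomotopygroupmackeyfunctors} will then identify $\underline{\pi}_0$ on the nose and we need only check that the higher homotopy Mackey functors vanish.

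First I would make precise the passage from a Mackey functor $M$ to a $\PC$-functor $X_M$. The natural choice is to send a finite $G$-set $A$ to a discrete permutative category $X_M(A)$ whose objects are the elements of the abelian group $M(A)$, with only identity morphisms and monoidal product given by the group addition; on morphism categories, a span $A\leftarrow C\to B$ (or its group completion) is sent to the additive homomorphism $M(B)\to M(A)$ coming from the Mackey structure, regarded as a strictly unital (strict) symmetric monoidal functor between discrete permutative categories. One must check this assignment is a genuine $\PC$-functor: the key points are that $M$ being additive makes the assignment on morphism categories bilinear in the sense of Definition~\ref{defn:multilinear} (the two linearity constraints are identities because everything is strict and discrete), and that functoriality of $M$ with respect to composition of spans gives the compatibility diagrams of Definition~\ref{defn:enrfunctor}. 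This is essentially the equivariant analogue of viewing an abelian group as a discrete symmetric monoidal category, and I would present it as a construction, leaving the diagram-chases to the reader.

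Next, invoke Proposition~\ref{prop:thehomotopygroupmackeyfunctors}. For the discrete permutative category $X_M(G/H)$ whose objects are the elements of $M(G/H)$ and whose monoidal product is addition, the abelian monoid of connected components is already the abelian group $M(G/H)$, so its group completion is $M(G/H)$ itself; hence $\underline{\pi}_0(\eqmach X_M)(G/H)=M(G/H)$, and by naturality (Proposition~\ref{GM:pi0}) the restriction and transfer maps are exactly those of $M$. For the vanishing of $\underline{\pi}_n$ for $n\neq 0$, the point is that the nonequivariant $K$-theory spectrum $\K(\A)$ of a discrete permutative category $\A$ whose $\pi_0$-monoid is a group is an Eilenberg--MacLane spectrum: this is a standard fact about the $K$-theory machine (the classifying space of a grouplike discrete symmetric monoidal category is a $K(\pi_0,0)$, and group completion does not change it). Applying Proposition~\ref{prop:thehomotopygroupmackeyfunctors} levelwise, $\pi_n$ of each value spectrum vanishes for $n\neq 0$, so $\underline{\pi}_n(\eqmach X_M)=0$ for $n\neq 0$. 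A spectral functor with homotopy Mackey functors concentrated in degree $0$ and equal to $M$ there is, by definition and Theorem~\ref{mainthrmofGuillouMay}, the Eilenberg--MacLane spectrum $HM$.

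\textbf{The main obstacle} I anticipate is not the homotopical input—that $\K$ of a discrete grouplike permutative category is Eilenberg--MacLane is well known—but rather the bookkeeping needed to verify that $X_M$ is a bona fide $\PC$-functor, in particular that it respects the \emph{strictly unital} structure demanded throughout Section~\ref{sect:pairings} and that the composition bilinear maps assemble correctly after one passes through the group completion built into the definition of $\B_G$. One should be slightly careful that the Mackey functor $M$, defined on the group-completed category $\B_G$, pulls back to a genuinely functorial (not merely up-to-isomorphism) assignment on the strict $\PC$-category $\BGcat$; since $M$ factors through the quotient of $\BGcat^\op$ by $2$-morphisms and then through group completion, this is automatic, but it is the step where the strictness choices made in the definition of $\BGcat$ (the explicit pullbacks along identities) earn their keep. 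Once that is in place, Corollary~\ref{thrm:EMspectra} follows, since the whole construction is manifestly functorial in natural transformations of Mackey functors.
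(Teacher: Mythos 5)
Your proposal is correct and takes essentially the same route as the paper: view each $M(A)$ (and each hom-group) as a discrete permutative category so that $\ab$ sits inside $\perm$ as a $\PC$-category, use additivity of $M$ to get a $\PC$-functor $\BGcat^\op\to\perm$, and then combine Proposition \ref{prop:thehomotopygroupmackeyfunctors} with the fact that $\K$ of a discrete grouplike permutative category is a nonequivariant Eilenberg--MacLane spectrum. The one place the paper is more explicit than your appeal to ``naturality'' is in identifying the restriction and transfer maps on $\underline{\pi}_0$: it uses Proposition \ref{pi0worksspectralevel} and Corollary \ref{pi0works} to see that $(\pi_0)_\bullet\kobj$ carries the class of the functor determined by a span $f$ to the map induced on group completions, which is exactly $M(f)$.
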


There are two steps involved in proving this claim. First, we must show that a Mackey functor produces the correct input for our construction and then we must show that the machine outputs an Eilenberg--MacLane spectrum.

\begin{lemma}\label{Mackeyfunctorlemma}A Mackey functor $M\colon \B_G^\op\to \ab$ determines a $\PC$-functor $M\colon \BGcat^\op\to \perm$.
\end{lemma}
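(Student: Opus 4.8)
The plan is to construct $M\colon\BGcat^\op\to\perm$ directly from the Mackey functor data, exploiting the fact that an abelian group is a ``discrete'' permutative category. On objects, I would send a finite $G$-set $A$ to the permutative category $\underline{M(A)}$ whose objects are the elements of the abelian group $M(A)$, whose only morphisms are identities, whose monoidal sum $\oplus$ is the group addition, whose unit is $0$, and whose symmetry isomorphism is the identity; all the permutative-category axioms hold trivially because $M(A)$ is abelian, and $\underline{M(A)}$ is small since $M(A)$ is a set. On hom-categories, recall that an object of $\BGcat^\op(A,B)=\BGcat(B,A)$ is a span $B\leftarrow C\to A$, which represents a morphism $A\to B$ in $\B_G^\op$; applying the additive functor $M\colon\B_G^\op\to\ab$ to its isomorphism class gives a homomorphism $M(A)\to M(B)$, which is precisely the data of a strictly unital lax (indeed strict) symmetric monoidal functor $\underline{M(A)}\to\underline{M(B)}$, i.e.\ an object of $\perm(\underline{M(A)},\underline{M(B)})$. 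One checks that $\perm(\underline{M(A)},\underline{M(B)})$ is itself discrete — a natural transformation between functors valued in a discrete category is an identity — so there is no choice in how $M$ acts on the morphisms of $\BGcat^\op(A,B)$, namely on isomorphisms of spans: they must go to identity natural transformations, and this is well defined precisely because isomorphic spans share an isomorphism class. In short, $M$ on hom-categories is the composite of the collapse $\BGcat^\op(A,B)\to\B_G^\op(A,B)$, the map $M$ induces on hom-sets, and the discretification $\ab(M(A),M(B))\to\perm(\underline{M(A)},\underline{M(B)})$.

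With $M$ so defined, I would verify the two families of conditions: that each $M_{A,B}\colon\BGcat^\op(A,B)\to\perm(\underline{M(A)},\underline{M(B)})$ is strictly unital lax symmetric monoidal, and that the collection assembles into a $\PC$-functor in the sense of Definition \ref{defn:enrfunctor}. Monoidality of $M_{A,B}$ reduces to two properties of the Mackey functor: the disjoint union of spans goes to the sum of the corresponding homomorphisms — this is the additivity of $M$ on hom-groups, which matches the pointwise-addition permutative structure on $\perm(\underline{M(A)},\underline{M(B)})$ — and the empty span goes to the zero homomorphism, matching the monoidal unit. The multilinearity conditions of Definition \ref{defn:multilinear} impose nothing further, since all the linearity constraints in sight are identities: on the $\perm$ side $\delta_1$ is always the identity, and the relevant $\delta_2$ is the structure morphism of a strict monoidal functor, hence again the identity. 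Finally, the two coherence diagrams of Definition \ref{defn:enrfunctor} are diagrams in $\permmc$ whose relevant target objects, being $\perm$ of discrete permutative categories, are themselves discrete; such a diagram commutes as soon as it commutes on objects, which here amounts to $M(\id_A)=\id_{M(A)}$ and to $M(\omega_2\circ\omega_1)=M(\omega_2)\circ M(\omega_1)$ for composable spans. Both of these are immediate from functoriality of $M\colon\B_G^\op\to\ab$ once one notes that the quotient $\BGcat\to\B_G$ — which collapses isomorphic spans and the rigidified pullback composites to the single composite of $\B_G$ — is a functor, and is all that $M$ sees.

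I do not anticipate a genuinely hard step: the content is all in the discreteness of the targets, which forces every piece of coherence data to be an identity and collapses the $\PC$-functor axioms to plain functoriality and additivity. The only thing that demands care is bookkeeping — tracking the various $(-)^\op$'s so that spans point the right way, and confirming that although $M_{A,B}$ is defined on the honest category $\BGcat^\op(A,B)$ of spans with its rigidified composition, it factors through the isomorphism-class monoid and is therefore completely controlled by the $\ab$-enriched functoriality of $M$ on $\B_G^\op$. I would also record explicitly that $\underline{M(A)}$ satisfies the permutative axioms, so that no further coherence needs checking.
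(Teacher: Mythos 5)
Your proof is correct and follows essentially the same route as the paper: factor through the quotient $\BGcat^\op\to\B_G^\op$, regard abelian groups and their hom-groups as discrete permutative categories so that $\ab$ sits inside the $\PC$-category $\perm$, and use additivity of $M$ to get the monoidal structure on hom-functors. Your version merely spells out the discreteness argument that the paper leaves implicit.
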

\begin{proof}
Let $M\colon \B_G^\op\to \ab$ be a Mackey functor. As discussed in Section \ref{infiniteloops}, $\B_G$ is the quotient of the 2-category $\BGcat$.  We can regard $\ab$ as a 2-category with only identity 2-cells.    Hence $M$ determines a $2$-functor
\[ \BGcat^\op\fto{\mathrm{quot}} \B_G^\op \fto{M} \ab,\]
which, by abuse of notation,  we also call $M$.

Since $\ab$ is an abelian category, all hom-sets in $\ab$ are actually abelian groups.  We can regard each hom-abelian group as a permutative category whose objects are the elements of the abelian group and whose  monoidal product is given by addition. This makes $\ab$ into a $\PC$-category.  In fact, by regarding the object abelian groups as permutative categories as well, we may regard $\ab$ as a subcategory of the $\PC$-category $\perm$.  Since any Mackey functor is required to be additive, $M$ extends to a $\PC$-functor $M\col \BGcat^\op\to \ab\subset \perm$.  
\end{proof}

\begin{rem}
Because $\ab$ is a $\PC$-category with no non-identity 2-cells, any $\PC$-functor from $\BGcat^\op$ to $\ab$ must factor through the quotient $\B_G^\op$. 
\end{rem}

 Lemma \ref{Mackeyfunctorlemma} puts us in the situation of Section \ref{infiniteloops} and we apply Theorem \ref{precisestatementofmainthrm} to produce a $G$-spectrum $\eqmach M\col \BGspec^\op\to \Spec$.  We now need only check that $\eqmach M$ is an Eilenberg--MacLane spectrum for $M$.

\begin{proof}[Proof of Theorem \ref{EMspecthrm}]
As in the previous lemma, $M$ determines a functor $M\colon \BGcat^\op\to \perm$.   Theorem \ref{precisestatementofmainthrm} produces a genuine $G$-spectrum $\eqmach M\col \BGspec^\op\to \Spec$ from this data.  In order to show that this is indeed an Eilenberg--MacLane spectrum for $M$, we must check that its homotopy groups are correct using Proposition \ref{GM:pi0}.
This proposition asserts that $\underline{\pi}_n(\eqmach M)$ is the Mackey functor
\[ \B_G^\op\cong (\pi_0)_\bullet\K_\bullet \BGcat\xto {(\pi_0)_\bullet\circ\K_\bullet M} (\pi_0)_\bullet\K_\bullet \perm \xto{(\pi_0)_\bullet\kobj} \HoSpec \xto{\pi_n} \ab\]

On objects,  the functor $\eqmach M\col \BGspec^\op\to \Spec$ is easy to understand:  the image of a finite $G$-set $A\in \BGspec$ is the $K$-theory spectrum of the abelian group $M(A)$, which is a nonequivariant Eilenberg--MacLane spectrum $H(M(A))$.  This implies that its homotopy groups are 
\[\underline{\pi}_n(\eqmach M)(A)=\pi_n(H(M(A)))=\begin{cases} M(A) &\textrm{if\ } n=0\\
0 &\textrm{otherwise.} 
\end{cases}
\]
Thus $\underline{\pi}_n(\eqmach M)$ is the zero Mackey functor for $n\neq 0$. 

 We need only show that $\underline{\pi}_0(\eqmach M)$ is the original Mackey functor $M$. We have just identified the group $\underline{\pi}_0(\eqmach M)(A)$ as $M(A)$ and so we need only show that the maps relating these groups are those of the Mackey functor $M$.  Let 
\[f =  B \leftarrow C\to A\]
be a span.  The morphisms in $\B_G(B,A)$ are the group completion of isomorphism classes of such spans, and thus their image under the composite above  is determined by understanding the image of spans.  By the construction of $M\col \BGcat^\op\to \perm$, the span $f$ goes to the strictly unital monoidal functor given on objects by the group homomorphism $M(f)\colon M(A)\to M(B)$; all morphisms in the source and target categories are the identity.  Proposition \ref{pi0worksspectralevel} implies that this functor is mapped under $(\pi_0)_\bullet\kobj$ to the homotopy class of the  induced map of $K$-theory spectra. In this case, the $K$-theory spectra are the Eilenberg--MacLane spectra $H(M(A))$ and $H(M(B))$ and thus the homotopy class of the map $\K M(f)$ is determined by its image on $\pi_0$.  By Corollary \ref{pi0works}, we conclude that $\pi_0[\K M(f)]\col \pi_0H(M(A))\to \pi_0 H( M(B))$ is the desired group homomorphism $M(f)$.
\end{proof}

\section{Suspension spectra of finite $G$-sets}\label{sect:suspspec}

Using the main construction of Section \ref{infiniteloops}, we also produce equivariant suspension spectra for finite $G$-sets, and in particular, the $G$-equivariant sphere spectrum.  These suspension spectra are the results of applying our machine to the representable functors $\BGcat^\op\to \perm$ given by the finite $G$-sets $X\in \BGcat$.  In their paper \cite{GM2011}, Guillou and May identify suspension spectra of finite $G$-sets as representable functors $\BGspec^\op\to \Spec$, and so our construction of these suspension spectra is not a surprising result.  Rather, it may be thought of as a sort of consistency check which demonstrates that our construction agrees with pre-existing understandings of equivariant spectra in terms of fixed points.

\begin{defn}\label{defnofsuspfunct} Let $S_X\colon \BGcat^\op \to \perm$ be the representable functor given by the finite $G$-set $X\in \BGcat$.  That is, for any $A\in \BGcat$,  
\[S_X(A)=\BGcat^\op(X,A)=\BGcat(A, X)\]
which is a permutative category since $\BGcat$ is a $\PC$-category. On morphisms, this functor is given by the unique strictly unital symmetric monoidal functor 
 \[ S_X \colon \BGcat^\op(A,B) \to \perm (\BGcat^\op(X,A),\BGcat^\op(X,B))\]
which  Lemma \ref{lemma:closed} produces from the bilinear composition map
\[ \comp\colon (\BGcat^\op(A,B),\BGcat^\op(X,A))\to \BGcat^\op(X,B).\]
\end{defn}

\begin{prop}\label{suspenfunctorwelldefined} The functor $S_X\colon \BGcat^\op \to \perm$ is a well-defined $\PC$-functor.
\end{prop}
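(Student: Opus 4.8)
The plan is to recognize $S_X$ as the $\PC$-enriched analogue of a covariant representable functor and to verify the two axioms of Definition~\ref{defn:enrfunctor} by repeatedly invoking the universal property of the evaluation map from Lemma~\ref{lemma:closed}. On objects there is nothing to check: $S_X(A)=\BGcat^{\op}(X,A)$ is one of the morphism permutative categories of the $\PC$-category $\BGcat^{\op}$, which is itself well defined since the multicategory $\permmc$ is symmetric. On a morphism object $\BGcat^{\op}(A,B)$, the functor $S_X$ is the strictly unital lax symmetric monoidal functor
\[S_X\colon \BGcat^{\op}(A,B)\to \perm\bigl(\BGcat^{\op}(X,A),\BGcat^{\op}(X,B)\bigr)\]
supplied by Lemma~\ref{lemma:closed} applied to the bilinear composition map $\comp\colon (\BGcat^{\op}(A,B),\BGcat^{\op}(X,A))\to \BGcat^{\op}(X,B)$; this uses only that composition in a $\PC$-category is bilinear, and by construction $ev\circ(S_X,1)=\comp$.

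For the identity axiom I would show that $S_X$ sends the identity span on $A$ to the identity functor of $\BGcat^{\op}(X,A)$, i.e. $S_X(\id_A)=\id_{S_X A}$ as $0$-morphisms $()\to\perm(S_X A,S_X A)$ in $\permmc$. By Lemma~\ref{lemma:closed} and the definition of $ev$, the value $S_X(\id_A)$ is the lax symmetric monoidal endofunctor $g$ of $\BGcat^{\op}(X,A)$ with $g(\phi)=ev(g,\phi)=\comp(\id_A,\phi)$ on all objects and morphisms $\phi$. Strict unitality of composition in $\BGcat$---which is precisely what the explicit choices of pullbacks along identity maps guarantee, so that the identity diagram~\eqref{eqn:identity} for $\BGcat^{\op}$ holds on the nose---gives $\comp(\id_A,-)=\id$ with identity structure morphism; hence $S_X(\id_A)$ is the identity functor, as required.

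For the composition axiom I would verify commutativity in $\permmc$ of the square of Definition~\ref{defn:enrfunctor} with $F=S_X$, $\C=\BGcat^{\op}$, $\D=\perm$ and objects $A,B,C$. Both legs are $2$-morphisms with target $\perm(S_X A,S_X C)$, so by the multilinear form of the uniqueness clause of Lemma~\ref{lemma:closed} (equivalently, by the closed structure on $\permmc$, cf. Remark~\ref{rem:otherwaystoseeclosedmonoidalstuff}) it suffices to show they agree after composing with $ev\colon(\perm(S_X A,S_X C),S_X A)\to S_X C$. Using $ev\circ(S_X,1)=\comp$, the top-then-right leg composed with $ev$ is the iterated composition $\comp\circ(\comp,1)$ out of $(\BGcat^{\op}(B,C),\BGcat^{\op}(A,B),S_X A)$, sending $(\gamma,\beta,\phi)$ to $(\gamma\circ\beta)\circ\phi$; the left-then-bottom leg composed with $ev$ equals, by Diagram~\eqref{trilineareval} (compatibility of $ev$ with $\circ$ in $\perm$) together with two applications of $ev\circ(S_X,1)=\comp$, the iterated composition $\comp\circ(1,\comp)$ on the same objects, sending $(\gamma,\beta,\phi)$ to $\gamma\circ(\beta\circ\phi)$. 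These two multilinear maps coincide by associativity of composition in $\BGcat^{\op}$, i.e. commutativity of Diagram~\eqref{eqn:assoc} for the $\PC$-category $\BGcat^{\op}$ (transparent here since composition in $\BGcat$ is strictly associative). This gives the desired equality of $2$-morphisms and shows $S_X$ is a $\PC$-functor.

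The point needing genuine care is the bookkeeping of the linearity constraints $\delta_1,\delta_2$, not merely the underlying functors, through these identifications: one must check that the coherence data of $\comp\circ(\comp,1)$ and of $\comp\circ(1,\comp)$ agree, which is where Diagram~\eqref{trilineareval}, the identification of $\delta_2$ for $\comp$ with the structure morphism of the outer map (as in Section~\ref{sect:perm}), and the symmetry conditions on $\comp$ do the real work. I expect this to be the main obstacle, although it is routine once the underlying-functor identities are established; alternatively one can bypass the universal-property argument and directly compare the linearity constraints of all four edges of the square.
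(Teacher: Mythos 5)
Your proof is correct and follows the paper's own (first) route: the paper proves this proposition by citing the universal property of evaluation from Lemma~\ref{lemma:closed} as a direct analogue of the standard representability argument for enriched categories, with strict associativity (and unitality) of composition in $\BGcat$ as the key input — exactly the argument you spell out in detail, including the mild bilinear extension of the uniqueness clause that the closed structure of Remark~\ref{rem:otherwaystoseeclosedmonoidalstuff} supplies.
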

\begin{proof} This fact follows from the universal property of evaluation given in Lemma \ref{lemma:closed} via a direct analogue of  the standard proof of the existence of representable functors for categories enriched in a monoidal category $\V$.  Alternately, the authors have checked that the diagrams of Definition \ref{defn:enrfunctor} commute using the explicit definition of composition and units in the category $\BGcat$.  Either way, the key to the proof is that composition in $\BGcat$ is associative.
\end{proof}

\begin{rem}
Proposition \ref{suspenfunctorwelldefined} is the multicategorical version of the standard fact that representable functors exist for categories enriched in closed symmetric monoidal categories. As mentioned in Remark \ref{rem:otherwaystoseeclosedmonoidalstuff}, this type of closure property can be viewed as arising from the embedding of $\permmc$ into the closed symmetric monoidal category of based multicategories of \cite{EM2009}.
\end{rem}

We can apply our construction to the functor $S_X(-)$. 
\begin{thrm}\label{identifysuspspec} 
Suppose $X$ is a finite $G$-set. Let $\eqmach S_X\colon \BGspec^\op\to \Spec$ be the functor $\kobj\circ \K_\bullet S_X$ produced from $S_X$ by Theorem \ref{precisestatementofmainthrm}.  Then $\eqmach S_X$ represents the suspension spectrum $\Sigma^\infty_G(X_+)$. 
\end{thrm}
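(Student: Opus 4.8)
The plan is to combine our explicit description of $\eqmach S_X$ with the Guillou--May identification of suspension spectra as represented spectral functors on $\BGspec$. Recall from Theorem~\ref{mainthrmofGuillouMay} and the discussion in \cite{GM2011} that the suspension spectrum $\Sigma^\infty_G(X_+)$ corresponds, under the Quillen equivalence, to the spectrally-enriched functor represented by $X\in\BGspec$, namely $A\mapsto \BGspec(A,X)=\K\BGcat(A,X)$. So the goal is to produce a natural equivalence of spectral functors between $\eqmach S_X=\kobj\circ\K_\bullet S_X$ and this represented functor $\BGspec(-,X)$.

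First I would unwind both functors on objects. On an object $A\in\BGspec^\op$, the functor $\eqmach S_X$ gives $\K(S_X(A))=\K(\BGcat^\op(X,A))=\K(\BGcat(A,X))=\BGspec(A,X)$, which is literally equal (not just equivalent) to the value of the represented functor. So the two spectral functors agree on objects on the nose; the content is in showing they agree on morphism spectra, i.e.\ that the square
\begin{equation*}
\xymatrix{
\BGspec(A,B)\ar[r]^-{\K_\bullet S_X}\ar[dr]_-{\circ} & F(\K\BGcat(X,A),\K\BGcat(X,B))\\
& \phantom{X}
}
\end{equation*}
commutes, where the diagonal is the adjoint of the composition map $\BGspec(A,B)\sma\BGspec(X,A)\to\BGspec(X,B)$ that defines the represented spectral functor. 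By construction, $\K_\bullet S_X$ on morphisms is $\kobj$ applied to $\K(S_X)$, where $S_X$ on morphisms is the strictly unital lax monoidal functor $\BGcat^\op(A,B)\to\perm(\BGcat^\op(X,A),\BGcat^\op(X,B))$ that Lemma~\ref{lemma:closed} extracts from the bilinear composition map. The key diagram chase is therefore: take the defining adjunction square \eqref{adjointdiagram} for $\kobj$, precompose with $\K(S_X)\sma\id$, and use the characterizing property of $S_X$ from Lemma~\ref{lemma:closed} — namely that $ev\circ(S_X,1)=\comp$ as bilinear maps $(\BGcat^\op(A,B),\BGcat^\op(X,A))\to\BGcat^\op(X,B)$. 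Applying the multifunctor $\K$ to this identity of bilinear maps in $\permmc$ (using Corollary~\ref{cor:Kofmultimap}) gives $\K(ev)\circ(\K S_X\sma\id)=\K(\comp)$, which is exactly the commutativity needed after passing back through the adjunction. I would also check the unit condition separately, but it is formal given that $S_X$ preserves identities, which in turn follows from associativity of composition in $\BGcat$ (as used in Proposition~\ref{suspenfunctorwelldefined}).

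The main obstacle is not the diagram chase itself but rather pinning down precisely which spectral functor represents $\Sigma^\infty_G(X_+)$ in the Guillou--May framework and verifying that the represented spectral functor on $\BGspec$ (built from the spectral composition, i.e.\ $\K$ applied to composition in $\BGcat$) is the one our construction reproduces. Concretely, I expect the work to be in reconciling two a priori different ``represented functors'': the one Guillou--May write down directly on the spectral category $\BGspec$ via its enriched composition, versus $\kobj\circ\K_\bullet S_X$, which first represents at the $\PC$-level using Lemma~\ref{lemma:closed} and then applies $K$-theory. The identity $ev\circ(S_X,1)=\comp$ is the bridge, and the fact that $\K$ is a genuine multifunctor (Theorem~\ref{Kpairs}) is what allows this $\PC$-level identity to descend to the spectral level. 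Once that is in place, the remaining point — that the represented spectral functor on $\BGspec$ does correspond to $\Sigma^\infty_G(X_+)$ under the equivalence of Theorem~\ref{mainthrmofGuillouMay} — is quoted directly from \cite{GM2011}, so I would cite it rather than reprove it. As a sanity check one can also apply Proposition~\ref{prop:thehomotopygroupmackeyfunctors} to see that $\underline{\pi}_0(\eqmach S_X)(G/H)$ is the Burnside-module of $G/H$-spans into $X$, which matches $\underline{\pi}_0\Sigma^\infty_G(X_+)$.
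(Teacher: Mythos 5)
Your proposal is correct and follows essentially the same route as the paper: agree on objects on the nose, quote Guillou--May for the representability of $\Sigma^\infty_G(X_+)$, and reduce the comparison on morphism spectra, via the adjunction defining $\kobj$, to applying the multifunctor $\K$ to the identity $ev\circ(S_X,1)=\comp$ from Lemma \ref{lemma:closed}. (The separate ``unit condition'' check you mention is superfluous when verifying two enriched functors coincide, but this does not affect the argument.)
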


As mentioned at the beginning of this section, we prove this theorem by comparing $\eqmach S_X$ to the suspension spectrum $\Sigma^\infty_G(X_+)$ as characterized by Guillou and May.

\begin{prop}[{\cite[\S 2.5]{GM2011}}]\label{GMsuspensionspectra}
The suspension spectrum $\Sigma^\infty_G(X_+)$ corresponds to the presheaf of spectra
\[\BGspec^\op\to \Spec\]
represented by $X\in \BGspec$.
\end{prop}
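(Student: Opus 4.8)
This proposition is really a matter of unwinding the construction behind Theorem~\ref{mainthrmofGuillouMay} and tracking where its equivalence sends a representable presheaf. Recall the shape of the Guillou--May equivalence: it is induced by a spectrally-enriched functor
\[ j\colon \BGspec \to G\text{-}\mathrm{Sp} \]
into a point-set model of genuine $G$-spectra, which on objects sends a finite $G$-set $A$ to its suspension spectrum $\Sigma^\infty_G A_+$, and whose map on morphism spectra $\BGspec(A,B) = \K(\BGcat(A,B)) \to F_G(\Sigma^\infty_G A_+, \Sigma^\infty_G B_+)$ is obtained by sending a span $A\leftarrow C\to B$ to the composite of the transfer $\Sigma^\infty_G A_+ \to \Sigma^\infty_G C_+$ with the induced map $\Sigma^\infty_G C_+ \to \Sigma^\infty_G B_+$, these being assembled compatibly over the permutative category of spans via the multifunctoriality of $\K$ (Corollary~\ref{cor:Kofmultimap}). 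The equivalence of Theorem~\ref{mainthrmofGuillouMay} is then realized by the adjoint pair consisting of the prolongation of $j$ along the enriched Yoneda embedding $\mathcal{Y}\colon \BGspec \to \Fun_{\Spec}(\BGspec^\op,\Spec)$ on one side and the functor $Y\mapsto F_G(j(-),Y)$ on the other.

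The first step is the purely formal observation that $j$ takes representable presheaves to suspension spectra. Writing $\Phi := \mathrm{Lan}_{\mathcal{Y}}\, j$ for the prolongation, the enriched co-Yoneda lemma (left Kan extension along a fully faithful functor restricts to the original functor) gives a natural isomorphism
\[ \Phi\big(\mathcal{Y}(X)\big) \;\cong\; j(X) \;=\; \Sigma^\infty_G X_+ \]
for every finite $G$-set $X$. Since $\mathcal{Y}(X)$ is by definition the presheaf $A\mapsto \BGspec(A,X)$ represented by $X$, and $\Phi$ is the underlying point-set functor of the Guillou--May equivalence, this is exactly the asserted correspondence between $\Sigma^\infty_G X_+$ and the represented presheaf.

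The substantive point --- and the place where one must appeal to the machinery of \cite{GM2011} rather than to formalities --- is that this strict identification is a \emph{homotopical} one, i.e.\ that it is compatible with the derived equivalence of homotopy categories in Theorem~\ref{mainthrmofGuillouMay}. The cleanest way to see this is to argue with the right adjoint instead: the presheaf corresponding to a $G$-spectrum $Y$ is $A\mapsto F_G(\Sigma^\infty_G A_+, Y)$, so the presheaf corresponding to $Y=\Sigma^\infty_G X_+$ is $A\mapsto F_G(\Sigma^\infty_G A_+,\Sigma^\infty_G X_+)$. The content of \cite[\S 2]{GM2011} is that the structure map of $j$,
\[ \BGspec(A,X)\;\longrightarrow\; F_G(\Sigma^\infty_G A_+,\Sigma^\infty_G X_+), \]
is a $\pi_*$-isomorphism of spectra for all finite $G$-sets $A$ and $X$, naturally in $A$ --- equivalently, $j$ is a Dwyer--Kan equivalence onto the full spectral subcategory of genuine $G$-spectra on the objects $\Sigma^\infty_G A_+$. (At the level of $\pi_0$ this is the Segal--tom~Dieck identification of the Burnside ring with $\pi_0^G$ of the sphere together with its relative version; \cite{GM2011} upgrade this to the full mapping spectrum.) This natural weak equivalence $\mathcal{Y}(X)=\BGspec(-,X)\xrightarrow{\sim}F_G(\Sigma^\infty_G(-)_+,\Sigma^\infty_G X_+)$ identifies the represented presheaf with the presheaf of $\Sigma^\infty_G X_+$, which is the claim. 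I expect the only real work to be in quoting this weak-equivalence statement correctly from \cite{GM2011} and matching conventions (variance, choice of point-set model of $G$-spectra), rather than in any new argument.

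Finally, one can observe that it suffices to treat orbits $X=G/H$: a span $A\leftarrow C\to X\amalg X'$ splits uniquely according to the two summands, so $\BGcat(A,X\amalg X')\cong\BGcat(A,X)\times\BGcat(A,X')$ as permutative categories; applying $\K$, which carries finite products of permutative categories to products --- hence finite wedges --- of spectra, yields $\BGspec(-,X\amalg X')\simeq\BGspec(-,X)\vee\BGspec(-,X')$, matching $\Sigma^\infty_G(X\amalg X')_+\simeq\Sigma^\infty_G X_+\vee\Sigma^\infty_G X'_+$. So the general case follows from the orbit case by additivity, the finite-wedge bookkeeping being harmless throughout.
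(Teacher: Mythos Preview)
The paper does not prove this proposition at all: it is stated with attribution to \cite[\S 2.5]{GM2011} and then used as a black box in the proof of Theorem~\ref{identifysuspspec}. So there is no ``paper's own proof'' to compare against; your proposal is instead a reconstruction of what Guillou and May do in the cited section.

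As such a reconstruction, your sketch is accurate. The Guillou--May equivalence is indeed built from a spectral functor $j\colon \BGspec\to G\text{-}\mathrm{Sp}$ sending $A$ to $\Sigma^\infty_G A_+$, and the formal left-Kan-extension/co-Yoneda argument you give correctly identifies the image of a representable presheaf with $j(X)=\Sigma^\infty_G X_+$. You are also right that the only non-formal input is that $j$ is a Dwyer--Kan equivalence onto its image, i.e.\ that the structure maps $\BGspec(A,X)\to F_G(\Sigma^\infty_G A_+,\Sigma^\infty_G X_+)$ are stable equivalences; this is precisely what \cite{GM2011} establish, and as you anticipate, the work here is in citing them correctly rather than in any new argument. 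Your final reduction to orbits via additivity is correct but superfluous: the Dwyer--Kan equivalence in \cite{GM2011} is already stated for all finite $G$-sets, so no decomposition is needed.
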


\begin{proof}[Proof of Theorem \ref{identifysuspspec}]
By Prop \ref{GMsuspensionspectra}, we need only compare our spectral functor $\eqmach S_X$ to the spectral functor $\BGspec^\op(X,-)$  represented by $X$. By definition these functors are the same on objects: they both take an object $A\in \BGspec^\op$ to the spectrum $\K\BGcat^\op(X,A)=\K\BGcat(A,X)$.  We must show that the maps
\begin{equation}\label{suspensionspecmorphismfunct}\begin{split}\eqmach S_X\colon &\BGspec^\op(A,B)\to F(\BGspec^\op(X,A),\BGspec^\op(X,B))\\
\BGspec^\op(X,-)\colon &\BGspec^\op(A,B) \to F(\BGspec^\op(X,A),\BGspec^\op(X,B))
\end{split}
\end{equation}
which these functors induce on morphisms are also the same.

By definition, the map $\eqmach S_X$ is the composite
\[\K\BGcat^\op(A,B)\xto{\K_\bullet S_X} \K\perm(\BGcat^\op(X,A),\BGcat^\op(X,B))\xto{\kobj} F(\BGspec^\op(X,A),\BGspec^\op(X,B))\]
  Recall that $\kobj$ is defined in terms of its adjoint.  The map 
\[\BGspec^\op(X,-)\col \BGspec^\op(A,B)\to F(\BGspec^\op(X,A),\BGspec^\op(X,B))\]
 is defined to be the adjoint of 
\[\BGspec^\op(A,B)\sma\BGspec^\op(X,A) \xto{\comp} \BGspec^\op(X,B)\]
 Thus the maps on morphisms of (\ref{suspensionspecmorphismfunct}) agree if  the diagram
\begin{equation*}\label{boilsdowntothisdiag}
\xymatrix@C=9ex{
\K\BGcat^\op(A,B)\sma \K\BGcat^\op(X,A)\ar[dr]_{\comp}\ar[r]^-{\K_\bullet S_X\sma \id} &\K\perm(\BGcat^\op(X,A),\BGcat^\op(X,B))\sma\K\BGcat^\op(X,A) \ar[d]^{\K(ev)} \\
& \K\BGcat^\op(X,B)
}
\end{equation*}
commutes.  Here we have replaced $\BGspec^\op(-,-)$  by  $\K\BGcat^\op(-,-)$  since  the latter spectrum is the definition of the former.  The arrow labeled $\comp$ is composition in the spectrally enriched category $\BGspec^\op$.  Since $\BGspec=\K_\bullet\BGcat$, this arrow is given by applying the multifunctor $\K$ to the composition morphism in the $\PC$-category $\BGcat^\op$.  Hence, the previous diagram is the image under $\K$ of the diagram of bilinear functors of permutative categories
\[
\xymatrix@C=4pc{
\left(\BGcat^\op(A,B), \BGcat^\op(X,A) \right) \ar[r]^-{(S_X,\id)}\ar[dr]_{\comp} & \left( \perm(\BGcat^\op(X,A), \BGcat^\op(X,B)), \BGcat^\op(X,A)\right)\ar[d]^{ev} \\
&\BGcat^\op(X,B).
}
\]
This diagram commutes by definition: it is an instance of the universal property of evaluation of Lemma  \ref{lemma:closed} which is used to define $S_X$.

Therefore, the functor $S_X\col \BGcat^\op\to \perm$ produces the suspension spectra $\Sigma^\infty_G(X_+)$ under application of the construction of Theorem \ref{precisestatementofmainthrm}.
\end{proof}

\bibliographystyle{alpha}
\bibliography{references}

\end{document}